\newtheorem{theorem}{Theorem}[section]
\newtheorem{thm}[theorem]{Theorem}
\newtheorem{prop}[theorem]{Proposition}
\newtheorem{lem}[theorem]{Lemma}
\newtheorem{fact}[theorem]{Fact}
\newtheorem{cor}[theorem]{Corollary}
\theoremstyle{definition}
\theoremstyle{remark}
\newcommand{\Log}{\operatorname{Log}}
\newcommand{\anfield}{\Q^{\mathrm{an}}_p}
\newcommand{\valp}{\operatorname{val_p}}
\newcommand{\tp}{\operatorname{tp}}
\newcommand{\val}{\operatorname{val}}
\newcommand{\rfield}{(\R,+,\times}
\newcommand{\pfield}{\Q_p}
\newcommand{\mfrak}{\mathfrak{m}}
\ProvideTextCommandDefault{\cprime}{(U+042C)}
\newcommand{\sub}{\subseteq}
\newcommand{\st}{\operatorname{st}}
\newcommand{\slam}{(\mathscr{R},\lambda^{\mathbb{Z}})}
\newcommand{\Sh}[1]{\ensuremath{\mathscr{#1}^{\mathrm{Sh}}}}
\newcommand{\Sq}[1]{\ensuremath{\mathscr{#1}^{\square}}}
\newcommand{\nip}{\mathrm{NIP}}
\newcommand{\Cal}[1]{\ensuremath{\mathcal{#1}}}
\newcommand{\Sa}[1]{\ensuremath{\mathscr{#1}}}
\newcommand{\Z}{\mathbb{Z}}
\newcommand{\N}{\mathbb{N}}
\newcommand{\Q}{\mathbb{Q}}
\newcommand{\R}{\mathbb{R}}
\begin{document}

\title[]{Nippy proofs of p-adic results of Delon and Yao}

\author{Erik Walsberg}
\address{Department of Mathematics, Statistics, and Computer Science\\
Department of Mathematics\\University of California, Irvine, 340 Rowland Hall (Bldg.\# 400),
Irvine, CA 92697-3875}
\email{ewalsber@uci.edu}
\urladdr{http://www.math.illinois.edu/\textasciitilde erikw}

\date{\today}

\maketitle

\begin{abstract}
Let $K$ be an elementary extension of $\Q_p$, $V$ be the set of finite $a \in K$, $\st$ be the standard part map $K^m \to \mathbb{Q}^m_p$, and $X \subseteq K^m$ be $K$-definable.
Delon has shown that $\mathbb{Q}^m_p \cap X$ is $\mathbb{Q}_p$-definable.
Yao has shown that $\dim \mathbb{Q}^m_p \cap X \leq \dim X$ and $\dim \st(V^n \cap X) \leq \dim X$.
We give new $\nip$-theoretic proofs of these results and show that both inequalities hold in much more general settings.
We also prove the analogous results for the expansion $\anfield$ of $\mathbb{Q}_p$ by all analytic functions $\mathbb{Z}^m_p \to \mathbb{Q}_p$.
As an application we show that if $(X_k)_{k \in \N}$ is a sequence of elements of an $\anfield$-definable family of subsets of $\Q^m_p$ which converges in the Hausdroff topology to $X \subseteq \Q^m_p$ then $X$ is $\anfield$-definable and $\dim X \leq \limsup_{k \to \infty} \dim X_k$.
\end{abstract}

\section{Introduction}
\noindent
Fix a prime $p$, let $K$ be an elementary extension of $\pfield$, $V$ be the set of $a \in K$ such that $\val(a) \geq k$ for some $k \in \Z$, and $\st$ be the standard part map $V^m \to \Q^m_p$.
Fact~\ref{fact:delon}, a $p$-adic analogue of the Marker-Steinhorn theorem~\cite{Marker-Steinhorn}, is due to Delon~\cite{Delon-def}.

\begin{fact}
\label{fact:delon}
If $X \subseteq K^m$ is $K$-definable then $\Q^m_p \cap X$ is $\pfield$-definable.
\end{fact}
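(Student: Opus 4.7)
The plan is to use a model-theoretic reduction followed by a type-definability argument. First, note the standard equivalence: for any $M \prec N$, the trace on $M^n$ of every $N$-definable subset of $N^n$ is $M$-definable (for all $n$) if and only if every type in $S(M)$ realized in $N$ is definable over $M$. One direction is the very definition of definability of types, the other writes the trace $\phi(M^n, \bar d)$ as the $M$-definable set cut out by the $\phi$-definition of $\tp(\bar d/M)$. Applied with $M = \Q_p$ and $N = K$, this reduces Fact~\ref{fact:delon} to showing that every $\tp(\bar d/\Q_p)$ with $\bar d \in K^k$ is definable over $\Q_p$.

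To establish definability of such types, I would fix $\bar d \in K^k$ and a $\Q_p$-formula $\psi(\bar x, \bar y)$, and argue that $\{\bar c \in \Q_p^{|\bar x|} : K \models \psi(\bar c, \bar d)\}$ is $\Q_p$-definable. By Macintyre's quantifier elimination, $\psi$ is a Boolean combination of atomic formulas of the form $\val(f(\bar x, \bar y)) \leq \val(g(\bar x, \bar y))$ and $P_n(h(\bar x, \bar y))$ with $f, g, h \in \Z[\bar x, \bar y]$, so it suffices to handle each atomic case. For a valuation condition, the key point is that $\val(K) \succ \Z$ elementarily as Presburger groups; the truth of $\val(f(\bar c, \bar d)) \leq \val(g(\bar c, \bar d))$ as $\bar c$ varies in $\Q_p^{|\bar x|}$ is controlled by the Presburger type of the tuple of valuations of the coefficients of $f(\cdot, \bar d)$ and $g(\cdot, \bar d)$ over $\Z$, and since Presburger types over $\Z$ are definable, the corresponding subset of $\Q_p^{|\bar x|}$ is $\Q_p$-definable. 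The $P_n$-predicates are handled analogously, tracking the residues of the coefficients of $h(\cdot, \bar d)$ modulo $n$-th powers in $K^\times$. Since $\mathrm{Th}(\Q_p)$ is $\nip$, one may alternatively organize this analysis via Chernikov--Simon-style honest definitions, which streamlines the bookkeeping.

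The main obstacle is making this case analysis uniform in $\bar c$. For each atomic condition in $\psi$, one must exhibit a single $\Q_p$-definable partition of $\Q_p^{|\bar x|}$ on whose pieces the condition's truth value is given by one $\Q_p$-formula, and then assemble these across the Boolean combination defining $\psi$. Juggling simultaneously the Presburger cuts of $\val(f(\bar c, \bar d))$, $\val(g(\bar c, \bar d))$ and the $n$-th power residues of $h(\bar c, \bar d))$ for all atomic subformulas is the technical heart of the argument; once handled, the trace is immediately expressible as a Boolean combination of $\Q_p$-definable sets, yielding Fact~\ref{fact:delon}.
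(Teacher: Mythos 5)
Your opening reduction is fine: the trace of every $K$-definable set on $\Q^m_p$ is $\pfield$-definable for all $m$ if and only if every type over $\Q_p$ realized in $K$ is definable, and this is indeed how Delon's theorem is usually stated. The gap is in the core step. The truth of $\val(f(\bar c,\bar d)) \leq \val(g(\bar c,\bar d))$, as $\bar c$ ranges over $\Q_p^{|\bar x|}$, is \emph{not} controlled by the Presburger type over $\Z$ of the valuations of the coefficients of $f(\cdot,\bar d)$ and $g(\cdot,\bar d)$. For fixed $\bar c \in \Q_p^{|\bar x|}$ the value $f(\bar c,\bar d)$ is a $\Q_p$-linear combination of those coefficients, and the valuation of such a combination is not determined by the valuations of its terms, because of additive cancellation. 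Concretely, take $f(x,d) = x - d$ with $d \in K \setminus \Q_p$ finite and $\val(d) = 0$: the set $\{ c \in \Q_p : \val(c - d) \geq k \}$ is a ball around the standard part of $d$ (or empty), so it depends on $\st(d)$, whereas the coefficient data $(\val(1),\val(d)) = (0,0)$ is the same for all such $d$. Two parameters with identical coefficient-valuation Presburger types thus yield different trace sets, so the uniform partition you hope to build in the last paragraph cannot be read off from that data; the same cancellation problem afflicts your treatment of $P_n(h(\bar x,\bar d))$ via coset data of coefficients.

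A symptom of the gap: as written, your argument uses nothing beyond Macintyre quantifier elimination and definability of Presburger types over $\Z$, both of which are available over \emph{every} model of $\mathrm{Th}(\Q_p)$ --- but definability of all types fails over general models, so the argument would prove too much. The missing ingredient is precisely where a special property of $\Q_p$ must enter: since $\Q_p$ is locally compact (in particular spherically complete with value group $\Z$), every finite-dimensional $\Q_p$-subspace of $K$ admits a separated basis, i.e.\ one with $\val\bigl(\sum_i \lambda_i e_i\bigr) = \min_i \val(\lambda_i e_i)$ for all $\lambda_i \in \Q_p$. Rewriting the coefficients of $f(\cdot,\bar d)$, $g(\cdot,\bar d)$ in such a basis eliminates cancellation and only then does your Presburger analysis go through; this separated-basis step is the technical heart of Delon's original QE-based proof, which your sketch is otherwise following. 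Note that the paper's proof is genuinely different and avoids Macintyre QE entirely: it passes to a highly saturated $\Sa L \succ \pfield$, which is a tame extension by local compactness, coarsens the valuation to a Henselian equicharacteristic-zero valuation whose residue field is $\Q_p$, applies Pas's theorem (Fact~\ref{fact:henselian}) to identify $\st(\Sa L)$ with $\pfield$, and then uses honest definitions (Fact~\ref{fact:shelah-cs}, as in Proposition~\ref{prop:induced-1}) to show $\pfield^{\mathrm{Sh}}$ is interdefinable with $\st(\Sa L)$, so that $\Q_p$ is Shelah complete.
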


\noindent
Fact~\ref{fact:residue-0} follows from standard results on equicharacteristic zero Henselian valued fields as $V$ is a Henselian valuation ring.

\begin{fact}
\label{fact:residue-0}
If $X \subseteq K^m$ is $K$-definable then $\st(V^m \cap X)$ is $\pfield$-definable.
\end{fact}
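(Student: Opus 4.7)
The plan is to invoke Ax-Kochen-Ershov style relative quantifier elimination for equicharacteristic zero Henselian valued fields. First I verify that $V$ qualifies. The $p$-adic valuation ring $\mathcal{O}_K$ of $K$ is Henselian, since $K \equiv \Q_p$ and Henselianity is first-order; the ring $V$ is the convex hull of $\mathcal{O}_K$ in $K$ with respect to the value group ordering, hence a coarsening of $\mathcal{O}_K$, and any coarsening of a Henselian valuation is Henselian. Its maximal ideal is $\mfrak = \{a \in K : \val(a) > \Z\}$, and the embedding $\Q_p \hrarr K$ provides a section of $V \to V/\mfrak$ identifying $V/\mfrak$ with $\Q_p$ and the residue map with $\st$. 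Hence $(K,V)$ is a Henselian valued field of residue characteristic zero.

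Next, I would apply the standard three-sorted relative quantifier elimination (Basarab-Kuhlmann, Pas) for such structures. In the language with sorts for $K$, for the residue field $V/\mfrak$ equipped with its full $p$-adic structure (Macintyre's language), and for the value group $\Gamma_K$, connected by $\st$ and $\val$, every formula with free variables in the $K$-sort is equivalent to a Boolean combination of residue-field and value-group conditions on polynomial expressions in the free variables, with parameters in the residue and value group sorts. Since $X$ is $K$-definable in Macintyre's language on $K$, it remains definable in this three-sorted expansion.

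To finish, apply this decomposition to the formula defining $X$, intersect with $V^m$, and push forward by $\st$. On $V^m$ the map $\st$ is a ring homomorphism, so the residue-field conditions collapse to $\Q_p$-definable conditions on $\st \bar x$. The value-group conditions split according to vanishing patterns of the relevant polynomials at $\st \bar x$: when a polynomial does not vanish at $\st \bar x$, its valuation is determined by $\st \bar x$ and contributes a $\Q_p$-definable condition; when it vanishes, its valuation is strictly above $\Z$ and the value group formula reduces accordingly. Existentially projecting over the resulting finite disjunction of value-group constraints produces a $\Q_p$-definable subset of $\Q^m_p$, which is precisely $\st(V^m \cap X)$.

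The main hurdle is the bookkeeping in this last step: one has to handle parameters from $K \setminus V$ (absorbed into the residue and value group sorts via the QE) and verify that the projection over $\Gamma_K$ collapses to a $\Z$-definable (hence $\Q_p$-definable) condition by Presburger elimination. This is entirely routine within the AKE framework, which is why the paper simply cites it as a standard consequence.
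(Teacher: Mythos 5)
Your setup is exactly the paper's: $V$ is a coarsening of the Henselian valuation ring $\mathcal{O}_K$, hence Henselian; the section $\Q_p \hookrightarrow K$ identifies the residue field $V/\mfrak$ with $\Q_p$ and the residue map with $\st$; so $(K,V)$ is a Henselian valued field of equicharacteristic zero and the relative quantifier elimination of Pas (or Basarab--Kuhlmann) applies. The paper's entire proof is to quote the consequence recorded as Fact~\ref{fact:henselian}: in such a field, every definable subset of a cartesian power of the residue field is definable in the residue field. Since $\st(V^m \cap X)$ \emph{is} such a set --- it is the image of the $(K,V)$-definable set $V^m \cap X$ under the definable residue map --- that single citation finishes the argument.

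Your hand-rolled final step, by contrast, has a genuine gap. After decomposing the formula for $X$ with the main-sort tuple $\bar x$ free, the atomic data are $w(f(\bar x))$ and angular components (or leading terms) of the $f(\bar x)$, and these do \emph{not} factor through $\st$: normalizing $f$ to have $V$-coefficients not all in $\mfrak$, they are constant on a fiber $\st^{-1}(\bar a)$ only when the reduction of $f$ does not vanish at $\bar a$. In the vanishing branch --- exactly the delicate case --- $w(f(\bar x))$ ranges over many positive values and the leading term over many residues as $\bar x$ runs over the fiber; saying ``its valuation is strictly above $\Z$ and the value group formula reduces accordingly'' settles nothing, because the decomposed formula can compare $w(f(\bar x))$ with $w(g(\bar x))$ and with value-group parameters and simultaneously constrain the associated residue data (which you drop entirely in this branch), and whether some lift of $\bar a$ realizes a given pattern is a nontrivial question about how $\bar a$ sits relative to the zero sets of the $f$'s. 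Consequently the remaining existential quantifier is over the \emph{main} sort, not the value group, so it cannot be discharged by quantifier elimination in the value group (note also that the value group of $w$ is $\Gamma_K$ modulo the convex hull of $\Z$, a divisible ordered abelian group, not a Presburger group). This ``routine bookkeeping'' is precisely the content of the theorem you should quote wholesale: apply the relative quantifier elimination to the full formula $\exists \bar x\, (\bar x \in V^m \cap X \wedge \st(\bar x) = \bar a)$ with $\bar x$ bound, i.e., invoke stable embeddedness and purity of the residue field as in Fact~\ref{fact:henselian}. Run that way, your argument becomes correct and coincides with the paper's.
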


\noindent
Let $\dim X$ be the dimension of a $\pfield$-definable set $X$.
Fact~\ref{fact:yao} is a result of Yao~\cite{yao}.

\begin{fact}
\label{fact:yao}
Suppose that $X \subseteq K^m$ is $\Sa K$-definable.
Then
$ \dim \Q^m_p \cap X \leq \dim X $
and
$ \dim \st(V^m \cap X) \leq \dim X.$
\end{fact}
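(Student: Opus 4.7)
The plan is to derive both inequalities from two $\nip$-theoretic ingredients: for $\pfield$-definable sets, dimension equals dp-rank (a classical fact since $\pfield$ is dp-minimal as a field); and an inp-pattern of a given depth in a set can often be lifted, through a suitable inclusion or definable map, to an inp-pattern of the same depth in an ambient set living in a richer structure.

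For the first inequality, let $Y := \pfield^m \cap X$; by Fact~\ref{fact:delon}, $Y$ is $\pfield$-definable, so $\dim Y = \dprk_\pfield Y$. Any inp-pattern of depth $d$ in $Y$ witnessed in $\pfield$ by formulas $\phi_i(x,y_i)$ and parameters $(a_{i,j}) \in \pfield$ reads off directly as an inp-pattern of depth $d$ in $X$ in $K$: row $k_i$-inconsistency is a first-order property preserved under $\pfield \preceq K$, while each column-consistent witness in $Y$ lies in $X$. Hence $\dim Y \le \dprk_K X = \dim X$.

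For the second inequality, let $W := \st(V^m \cap X)$; by Fact~\ref{fact:residue-0}, $W$ is $\pfield$-definable, so $\dim W = \dprk_\pfield W$. I would lift a depth-$d$ inp-pattern in $W$ to a depth-$d$ inp-pattern in $X$, this time through $\st^{-1}$. After a preliminary refinement using $p$-adic cell decomposition one may arrange that each $\phi_i(\pfield^m, a)$ appearing in the pattern is open at every relevant witness $w \in W$. Choosing a pre-image $x \in V^m \cap X$ of $w$, the infinitesimal $x - w \in \mathfrak{m}^m$ lies inside the standard ball around $w$ provided by openness in $\pfield$, and that ball transfers to a ball in $\phi_i(K^m, a)$ by elementarity, so $\phi_i(x, a)$ holds in $K$. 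Row inconsistency transfers to $K$ as in the first case, yielding an inp-pattern of depth $d$ in $X$ in $K$ and hence $\dim W \le \dprk_K X = \dim X$.

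The main obstacle is the open-cell reduction in the second inequality: one must refine the given inp-pattern so that its witnesses can be chosen in the interiors of the formulas' fibers without lowering the depth of the pattern. $p$-adic cell decomposition provides the raw material, but it must be applied coherently across all rows and columns. A possible alternative is to work in the $\nip$ pair $(K, V)$, where $\st$ is definable, and invoke Ax--Kochen--Ershov-style stable embeddedness of the residue field to transfer dp-rank computations; this avoids cell decomposition at the cost of careful dp-rank bookkeeping in the pair, since $(K,V)$ has strictly greater dp-rank than $K$ and the relevant dp-ranks of $X$ and $W$ must be correctly isolated from the value-group contribution.
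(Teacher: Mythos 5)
Your first inequality is essentially right, though it takes a different route from the paper: you consume Fact~\ref{fact:delon} to make the pattern formulas for $Y = \Q^m_p \cap X$ genuine formulas over $\pfield$, and then transfer by elementarity. The paper's Proposition~\ref{prop:1-ineq} instead transfers an array for $M^m \cap X$ directly from the Shelah expansion $\Sh M$ to $\Sa N$, using honest definitions (Fact~\ref{fact:shelah-cs}) to replace each external formula by an $L$-formula with parameters from $N$; that argument works in an arbitrary $\nip$ structure and does not need Delon's theorem as an input (which matters for the paper, since it also reproves Delon). Two small points to make your version airtight: dp-rank here is defined by ict-style arrays rather than inp-patterns, so you should either invoke their equivalence under $\nip$ or work with arrays directly; and path-consistency witnesses must be found in $\Q_p$ itself rather than in an elementary extension --- this is fine because each path is a finite set of conditions realized in $\pfield$ by elementarity (the paper's finitary $(\Sa M,X,\Phi,F,n)$-arrays are exactly this bookkeeping), after which the witness lies in $Y \subseteq X$ and satisfies the same formulas in $K$.

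The second inequality has a genuine gap at the step you yourself flag: a ``preliminary refinement using $p$-adic cell decomposition'' of the \emph{given} pattern cannot in general make the fibers open at the witnesses. If every fiber $\phi_i(\pfield^m;a_{i,j})$ is contained in a proper affine subspace --- say $W \subseteq \Z_p \times \{0\} \subseteq \Q^2_p$ and $\phi(x;a_j)$ defines $B_j \times \{0\}$ --- then every cell of every refinement has empty interior in $\Q^2_p$, so no choice of witness is ever interior; openness cannot be restored by refining, only by discarding the pattern and building a new one. The repair is available from Fact~\ref{fact:sw}: since dp-rank equals naive dimension, $\dim W = d$ yields a coordinate projection $\pi$ with $\pi(W)$ containing an open box, and a grid of pairwise disjoint clopen sub-boxes gives a depth-$d$ pattern whose fibers are the clopen preimages $\pi_i^{-1}(B_{i,j})$; with such a pattern your infinitesimal-lifting step ($x - w \in \mfrak^m$ lies in the transferred ball, row inconsistency transfers by elementarity) is correct. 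Note that the paper's Proposition~\ref{prop:induced-0} avoids patterns and topology entirely: the convex hull $O$ of $\Gamma_K$ in $\Gamma_L$ is externally definable, so $\st$ is $\Sh L$-definable, and the inequality is just Fact~\ref{fact:dp-rank-basic-1}(3) together with the invariance of dp-rank under Shelah expansion (Proposition~\ref{prop:1-ineq} applied with $\Sa N = \Sa M$). This also defuses the worry in your proposed alternative: since $V$ is externally definable, the pair $(\Sa L, V)$ is a reduct of $\Sh L$, so the dp-rank of an $\Sa L$-definable set is the same in $\Sa L$, in the pair, and in $\Sh L$ --- there is no value-group contribution to isolate.
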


\noindent
As $\Q^m_p \cap X$ is a subset of $\st(V^m \cap X)$ the first inequality is a corollary to the second, but we will see that they generalize in different directions.
The analogue of Fact~\ref{fact:yao} for o-minimal expansions of $\rfield)$ were previously proven by van den Dries~\cite{t-convexityII}.
We show that Fact~\ref{fact:yao} and its o-minimal analogue follow easily from the theory of externally definable sets in $\nip$ structures.
The first inequality generalizes to an arbitrary elementary extension of an arbitrary $\nip$ structure and the second inequality generalizes to any ``tame extension" of dp-minimal valued fields.
We also show that Fact~\ref{fact:delon} follows from general $\nip$ results and Fact~\ref{fact:residue-0}.
In Section~\ref{section:analytic} we prove the analogues of Facts~\ref{fact:delon} and \ref{fact:yao} for $\anfield$.
In Section~\ref{section:geometric} we use these results to study Hausdorff limits of $\anfield$-definable sets, this is the $p$-adic analogue of o-minimal work of van den Dries~\cite{vdd-limit}.

\subsection{Acknowledgements}
Thanks to Raf Cluckers and Silvian Rideau for providing a reference for Fact~\ref{fact:sr}.

\section{Conventions and notation}
\noindent
By ``definable" we mean ``first order definable, possibly with parameters".
Throughout $m,n$ are natural numbers, $i,j,k,l$ are integers, and $\Sa M$ is an $L$-structure.
If $X$ is an $\Sa M$-definable set and $\Sa M \prec \Sa N$ then $X(\Sa N)$ is the $\Sa N$-definable set defined by the same formula as $X$.
Two structures on the same domain are \textbf{interdefinable} if they define the same sets.
If $x = (x_1,\ldots,x_n)$ is a tuple of variables then $|x| = n$.
The structure \textbf{induced} on $A \subseteq M^m$ by $\Sa M$ is the structure with domain $A$ and an $n$-ary relation for $A^n \cap X$ for each $\Sa M$-definable $X \subseteq M^{nm}$.
Our reference on $\nip$ and dp-rank is Simon's book~\cite{Simon-Book}.

\section{Dp rank}
\label{section:dp-rank}
\subsection{Definition}
To generalize Fact~\ref{fact:yao} we need a general notion of dimension.
We use dp-rank.
The dp-rank of a definable set is either a cardinal or the formal symbol $\infty$ which is by declared to be larger than all cardinals.
Fix an $\Sa M$-definable set $X \subseteq M^{|y|}$.
Let $\lambda$ be a cardinal.
An $(\Sa M,X,\lambda)$-array consists of a sequence $( \varphi_\alpha(x_\alpha;y) : \alpha < \lambda)$ of $L$-formulas and an array $( a_{\alpha,i} \in M^{|x_\alpha|} : \alpha < \lambda, i < \omega)$ such that for every $f : \lambda \to \omega$ there is $b \in X$ such that 
$$ \Sa M \models \varphi_\alpha(a_{\alpha,i};b) \quad \text{if and only if} \quad f(\alpha) = i \quad \text{for all   } \alpha,i. $$
Then $\dim X \geq \lambda$ if there is $\Sa M \prec \Sa N$ and an $(\Sa N,X(\Sa N),\lambda)$-array.
If $\dim X \geq \lambda$ for all cardinals $\lambda$ then $\dim X := \infty$, we let
$ \dim X := \max\{ \lambda : \dim X \geq \lambda\} $
when this maximum exists and otherwise declare
$$ \dim X := \sup \{ \lambda : \dim X \geq \lambda \} - 1. $$
The dp-rank of $\Sa M$ is defined to be $\dim M$.
Of course these definitions raise the question of what exactly $\kappa - 1$ is when $\kappa$ is an infinite cardinal.
There are several options, and it does not matter which we select.
When the structure may not be clear from context we let $\dim_{\Sa M} X$ be the dp-rank of an $\Sa M$-definable set $X$.
\newline

\noindent
We will want to avoid passing to an elementary extension, so we use finitary arrays.
Let $\Phi$ be a sequence $(\varphi_\alpha(x_\alpha;y) : \alpha < \lambda)$ of $L$-formulas, $F \subseteq \lambda$ be finite, and $n \in \N$.
An $(\Sa M,X,\Phi,F,n)$-array is an array $(a_{\alpha,i} \in M^{|x_\alpha|} : \alpha \in F, i \leq n)$ such that for every $f : F \to n$ there is $b \in X$ such that for all $\alpha \in F, i \leq n$ we have $\Sa M \models \varphi_\alpha(a_{\alpha,i};b)$ if and only if $f(\alpha) = i$.
So $\dim X \geq \lambda$ if and only if there is such a $\Phi$ so that for every finite $F \subseteq \lambda$ and $n$ there is an $(\Sa M,X,\Phi,F,n)$-array.

\subsection{Properties}
\noindent
Dp-rank characterizes $\nip$ structures, see \cite{Simon-Book}.
\begin{fact}
\label{fact:dp-nip}
Let $T$ be a complete theory and $\Sa M \models T$.
The following are equivalent.
\begin{enumerate}
\item $\Sa M$ is $\nip$,
\item $\dim \Sa M < \infty$,
\item $\dim \Sa M < |T|^+$,
\item $\dim X < |T|^+$ for all $\Sa M$-definable sets $X$.
\end{enumerate}
\end{fact}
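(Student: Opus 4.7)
The plan is to prove the cycle $(4) \Rightarrow (3) \Rightarrow (2) \Rightarrow (1) \Rightarrow (4)$. The first two implications are immediate: $(4)$ applied to $X = M$ yields $(3)$, and $(3)$ yields $(2)$ since $|T|^+$ is a cardinal.

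For $(2) \Rightarrow (1)$ I would argue the contrapositive by a direct construction. Suppose some formula $\varphi(x;y)$ has the independence property, witnessed by $(a_i)_{i<\omega}$ in $M^{|x|}$ together with parameters $b_S$ for each $S \subseteq \omega$ satisfying $\Sa M \models \varphi(a_i,b_S) \Lrarr i \in S$. Fix any cardinal $\lambda$, any finite $F \subseteq \lambda$, and any $n \in \N$. Reindex $|F|\cdot(n+1)$ pairwise distinct elements of $(a_i)$ as $(a_{\alpha,i})_{\alpha \in F,\, i \leq n}$, and for each $f \colon F \to n+1$ let $S_f \subseteq \omega$ be the set of indices corresponding to the tuple $\{a_{\alpha,f(\alpha)} : \alpha \in F\}$; then $b_{S_f}$ realizes the required pattern. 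Hence the constant sequence $\Phi = (\varphi(x;y))_{\alpha<\lambda}$ satisfies the finitary array criterion and $\dim M \geq \lambda$ for every cardinal $\lambda$, so $\dim \Sa M = \infty$.

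The main obstacle is $(1) \Rightarrow (4)$, i.e.\ bounding dp-rank in $\nip$ theories. Arguing contrapositively, assume $\Sa M$ is $\nip$ and that $\dim X \geq |T|^+$ for some $\Sa M$-definable $X$; pass to $\Sa N \succ \Sa M$ large enough to host an $(\Sa N, X(\Sa N), |T|^+)$-array $\left((\varphi_\alpha)_{\alpha}, (a_{\alpha,i})_{\alpha,i}\right)$. Since there are only $|T|$-many formulas up to logical equivalence, pigeonhole yields a subset $A \subseteq |T|^+$ of size $|T|^+$ on which $\varphi_\alpha$ is a single formula $\varphi(x;y)$. The crucial step is then to extract, via a standard Erd\H{o}s--Rado / Ramsey argument carried out in a further saturated extension, a mutually indiscernible family of indiscernible sequences realizing the same combinatorial pattern; in that configuration the array compresses via compactness into an infinite independence witness for $\varphi$, contradicting $\nip$ of that single formula. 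The technical heart is this extraction together with the fine cardinal arithmetic needed to land the bound exactly at $|T|^+$ rather than, say, $(2^{|T|})^+$; I would cite the precise combinatorial input from Simon's monograph.
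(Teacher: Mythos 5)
The paper offers no proof of this statement at all: it is quoted as a Fact with a citation to Simon's book, so your proposal can only be measured against the standard argument there, which it largely reconstructs. The trivial implications $(4)\Rightarrow(3)\Rightarrow(2)$ are fine, but your $(2)\Rightarrow(1)$ has a genuine gap. The paper defines $\dim \Sa M$ to be $\dim M$, so to contradict $(2)$ you must produce arrays whose pattern-realizing objects $b$ lie in $X = M$, i.e.\ are \emph{single elements}. The witnesses $b_{S_f}$ your construction produces lie in $M^{|y|}$, so as written you have only shown $\dim_{\Sa M} M^{|y|} = \infty$, which contradicts $(4)$ but not $(2)$. To close the cycle you need an extra ingredient: either Shelah's reduction that a theory with IP has an IP formula $\varphi(x;y)$ with $|x| = 1$, where the single variable sits on the side of the subset-selecting parameters $b_S$ (Proposition 2.11 in Simon's book), or subadditivity of dp-rank to pass from $\dim M^{|y|} = \infty$ to $\dim M = \infty$. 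Neither is difficult to cite, but neither is free, and you invoked neither.

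Conversely, your $(1)\Rightarrow(4)$ is correct in outline but the ``technical heart'' you defer to Simon's monograph is unnecessary for this particular implication. Once pigeonhole (over the $|T|$ formulas, with $|T|^+$ regular) gives infinitely many rows $\alpha_0 < \alpha_1 < \cdots$ carrying the same formula $\varphi$, IP follows immediately: for each $S \subseteq \omega$ take $f_S$ with $f_S(\alpha_k) = 0$ when $k \in S$ and $f_S(\alpha_k) = 1$ otherwise; the array property (which holds for \emph{every} $f$) supplies $b_S$ with $\varphi(a_{\alpha_k,0};b_S)$ if and only if $k \in S$, so the sequence $(a_{\alpha_k,0})_{k<\omega}$ is shattered and $\varphi$ — equivalently its opposite, as NIP is self-dual — has IP. No extraction of mutually indiscernible sequences, no Erd\H{o}s--Rado, and no delicate cardinal arithmetic is needed; in particular the worry about landing at $|T|^+$ rather than $(2^{|T|})^+$ is moot, since pigeonhole alone already gives the sharp bound. (The mutually indiscernible machinery in Simon's book is needed for finer results such as additivity, not for this equivalence.)
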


\noindent
Fact~\ref{fact:dp-rank-basic-1} shows that dp-rank is a reasonable notion of dimension.
The first two items follow easily from the definition and the fourth is proven in \cite{dp-rank-additive}.

\begin{fact}
\label{fact:dp-rank-basic-1}
Suppose $\Sa M$ is $\nip$,  $X,Y$ are definable sets, and $f : X \to M^m$ is definable.
\begin{enumerate}
\item $\dim X = 0$ if and only if $X$ is finite.
\item $\dim X \cup Y = \max \{\dim X , \dim Y\}$. 
(so $X \subseteq Y$ implies $\dim X \leq \dim Y$),
\item $\dim f(X) \leq \dim X$,
\item If $\dim f^{-1}(a) \leq \lambda$ for all $a \in f(X)$ then $\dim X \leq \dim f(X) + \lambda$.
\end{enumerate}
\end{fact}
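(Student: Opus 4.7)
The plan is to verify the four clauses in turn, using the finitary array characterization of dp-rank; the fourth clause is the additivity result of the cited paper, so I would simply appeal to it.

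For (1), unfolding the definitions, $\dim X \geq 1$ is equivalent to the existence of a formula $\varphi(x;y)$ such that for every $n$ there is an $(\Sa M,X,\varphi,\{0\},n)$-array, that is, $a_0,\dots,a_n \in M^{|x|}$ together with pairwise distinct witnesses $b_0,\dots,b_n \in X$. If $X$ is finite this fails whenever $n \geq |X|$; conversely, if $X$ is infinite, take $\varphi(x;y)$ to be $x = y$ with $|x|=|y|$, choose distinct $a_i \in X$, and set $b_i := a_i$. Hence $\dim X = 0$ iff $X$ is finite.

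For (2), the inequality $\dim(X \cup Y) \geq \max(\dim X, \dim Y)$ is immediate, since any pattern witnessed in $X$ or in $Y$ is a pattern witnessed in $X \cup Y$. The reverse direction is the substantive part. I would argue by contrapositive using the finitary definition: if $\dim X < \lambda$ and $\dim Y < \lambda$ are witnessed by obstructions $(F_X,n_X)$ and $(F_Y,n_Y)$, suppose for contradiction there is an $(\Sa M, X\cup Y, \Phi, F, n)$-array with $F \supseteq F_X \cup F_Y$ and $n$ large. Each of the $(n+1)^{|F|}$ witnesses $b_g$ lies in $X$ or $Y$; a Ramsey/pigeonhole extraction then produces, after shrinking $n$ and restricting the realization function $g$ to a suitable subcube, a sub-array all of whose witnesses lie in a single one of $X$, $Y$, contradicting the choice of $(F_X,n_X)$ or $(F_Y,n_Y)$.

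For (3), any pattern witnessed in $f(X)$ by formulas $(\varphi_\alpha(x_\alpha;y))$ pulls back to a pattern in $X$: let $z$ range over the ambient space of $X$, replace each $\varphi_\alpha$ by $\psi_\alpha(x_\alpha;z) := \varphi_\alpha(x_\alpha;f(z))$, and note that any lift $c \in X$ of the original witness $b' = f(c)$ witnesses the new pattern. So $\dim X \geq \dim f(X)$. For (4), I would appeal directly to \cite{dp-rank-additive}.

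The main obstacle is the Ramsey extraction in (2): while the strategy is clear, the careful bookkeeping of the two-coloring of witnesses while preserving the array structure under shrinking of $n$ and $F$ is the one delicate point. Everything else is either an unwinding of definitions or a citation.
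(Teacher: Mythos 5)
Your proposal is correct, and on items (1), (3), (4) it does exactly what the paper does: the paper states this as a Fact, remarking only that the first two items ``follow easily from the definition'' and citing \cite{dp-rank-additive} for (4), so your unwindings of (1) and (3) and your citation for (4) are the intended content. Where you genuinely diverge is (2). The standard argument, implicit in the paper's reliance on \cite{Simon-Book}, uses the characterization of dp-rank via mutually indiscernible sequences (or, equivalently, passes to a saturated model and extracts an indiscernible array by Ramsey plus compactness); with that characterization (2) is immediate, since the single witness $b$ realizing the pattern lies in $X$ or in $Y$ and the \emph{same} sequences then witness the rank for that piece. Your route instead stays entirely finitary and replaces the indiscernibility extraction with a product (grid) Ramsey theorem: color each $f \in n^F$ by whether its witness lies in $X$ or $Y$, and extract a monochromatic combinatorial subgrid $\prod_{\alpha \in F} S_\alpha$ with $|S_\alpha| = n'+1$. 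This is a true and standard finite polarized partition result, so your ``delicate point'' is real but surmountable; what your sketch should make explicit is the final uniformity step: the monochromatic color a priori depends on $(F,n')$, and you need to pigeonhole over the directed family of pairs $(F,n')$, using that arrays restrict to sub-arrays (extend $f' : F' \to n'$ arbitrarily to $f : F \to n$), to conclude that a single one of $X$, $Y$ admits arrays for \emph{all} $(F,n')$. Your trade-off: the finitary argument avoids compactness and saturated extensions entirely (and, like (3), uses no NIP hypothesis at all), at the cost of grid-Ramsey bookkeeping that the indiscernible-sequence proof dissolves in one line. One small point in (3): as stated the pulled-back formula $\varphi_\alpha(x_\alpha; f(z))$ should be rendered as $\exists y\,(y = f(z) \wedge \varphi_\alpha(x_\alpha;y))$ with the parameters of $f$ absorbed into the array variables, but this is routine.
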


\noindent
We say that $\Sa M$ is \textbf{dp-minimal} when $\dim \Sa M \leq 1$.
O-minimal structures and $\Q_p$ are both dp-minimal~\cite{dp-basic}.
Dp-rank is the canonical notion of dimension for definable sets in dp-minimal expansions of valued fields or divisible ordered abelian groups.
Fact~\ref{fact:sw} is proven in \cite{SW-tametop}.
Let $Y$ be a topological space and equip $Y^n$ with the product topology.
The \textbf{naive dimension} of a nonempty $X \subseteq Y^n$ is the  maximal $0 \leq k \leq n$ such that $\pi(X)$ has interior for some coordinate projection $\pi : Y^n \to Y^k$.
Acl-dimension is defined in the same way as dimension is usually defined in a geometric structure (this definition makes sense in any structure).

\begin{fact}
\label{fact:sw}
Let $\Sa M$ be a dp-minimal expansion of a valued field or a divisible ordered abelian group and $X \subseteq M^n$ be definable and nonempty.
The following are equal:
\begin{enumerate}
\item The dp-rank of $X$,
\item The $\mathrm{acl}$-dimension of $X$, and
\item The naive dimension of $X$.
\end{enumerate}
\end{fact}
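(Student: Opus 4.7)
The plan is to establish the three equalities by a cyclic chain of inequalities $(3) \leq (2) \leq (1) \leq (3)$, leveraging the well-behaved topology of dp-minimal valued fields and divisible ordered abelian groups together with Simon's subadditivity theorem for dp-rank.

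For $(3) \leq (2)$: suppose the naive dimension equals $k$, witnessed by a coordinate projection $\pi: M^n \to M^k$ such that $\pi(X)$ contains a nonempty open box $U$. Choose a point $u \in U$ which is $\acl$-generic over the parameters defining $X$; in a dp-minimal expansion of an ordered abelian group or a valued field, a nonempty open subset of $M^k$ contains points whose coordinates are $\acl$-independent (this is where one uses that $\acl$ in $M$ agrees with the topological notion of isolated points). Lifting $u$ to some $x \in X$ then yields $\aldim(x) \geq k$, so the $\acl$-dimension of $X$ is at least $k$.

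For $(2) \leq (1)$: this is a formal consequence of the additivity/subadditivity of dp-rank on $\acl$-independent tuples in $\nip$ theories. An $\acl$-independent tuple of length $k$ inside $X$ has, by dp-minimality of $\Sa M$, each coordinate of dp-rank $1$, and subadditivity then gives dp-rank of the tuple (and hence of $X$) at least $k$.

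The main obstacle is $(1) \leq (3)$. I would argue by induction on $n$. For $n = 1$ the statement is exactly dp-minimality: a nonfinite definable set in $M$ has interior, by the assumption that $\Sa M$ expands a valued field or a divisible ordered abelian group (where isolated-point/interior dichotomies for definable subsets of $M$ are known). For the inductive step, consider $\pi: M^n \to M^{n-1}$ forgetting the last coordinate, together with the fibers $X_a = \{y \in M : (a,y) \in X\}$. If no coordinate projection of $X$ onto a $k$-dimensional subspace has interior, then by the inductive hypothesis $\pi(X)$ has dp-rank at most its naive dimension $< n-1$ or its fibers drop; combining with Fact~\ref{fact:dp-rank-basic-1}(4) one should produce a stratification of $X$ into pieces whose dp-rank is bounded by the naive dimension. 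The hardest ingredient will be ensuring that a projection failing to have interior \emph{strictly} lowers the dp-rank, which requires the cell-like decomposition results for dp-minimal valued fields and ordered groups together with generic continuity of definable functions; without these topological inputs the induction cannot close, and indeed this is precisely the work carried out in \cite{SW-tametop}.
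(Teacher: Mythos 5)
First, a point of comparison: the paper itself contains no proof of this statement --- it is imported as a black box with the attribution ``Fact~\ref{fact:sw} is proven in \cite{SW-tametop}'' --- and your proposal, at the decisive moment, does the same thing. The inequality $(1) \leq (3)$ (dp-rank bounded by naive dimension) is the entire substance of the theorem, and your induction does not close on its own. Applying Fact~\ref{fact:dp-rank-basic-1}(4) to the coordinate projection $\pi : M^n \to M^{n-1}$ only gives $\dim X \leq \dim \pi(X) + 1$, which the inductive hypothesis turns into (naive dimension of $\pi(X)$) $+\,1$; to conclude that this bounds the naive dimension of $X$ itself you need that a definable family of infinite fibers over a positive-dimensional base forces a $k$-dimensional projection of $X$ to have interior. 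That requires knowing that infinite definable subsets of $M$ have nonempty interior and that this happens uniformly over a set with interior --- i.e., generic continuity and the ``interior theorem'' (a definable $X \subseteq M^n$ of dp-rank $n$ has nonempty interior). You correctly identify this as ``precisely the work carried out in \cite{SW-tametop}'', but invoking the cited paper for the hardest step makes the proposal a reduction to the known result, not a proof of it.

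Separately, your step $(2) \leq (1)$ is wrong as justified. Subadditivity of dp-rank (Kaplan--Onshuus--Usvyatsov, reflected in Fact~\ref{fact:dp-rank-basic-1}(4)) bounds the rank of a tuple from \emph{above}; dp-rank is not superadditive in general, so $\operatorname{acl}$-independence of $k$ coordinates does not formally yield dp-rank $\geq k$. The easy ``equality-array'' argument that gives $\dim M^k \geq k$ requires the witnessing points to lie in $X$, hence essentially requires $X$ to contain a product of infinite sets; for a general $X$ containing an $\operatorname{acl}$-independent point one must either invoke exchange for $\operatorname{acl}$ (itself a nontrivial theorem in this setting) and build $k$ mutually indiscernible sequences each cut by points of $X$, or route the inequality as $(2) \leq (3)$: the projection onto the $k$ independent coordinates contains an $\operatorname{acl}$-generic point of $M^k$, and a definable set containing such a point has interior (again by the topological machinery). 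Note also that the genuinely easy link is the direct $(3) \leq (1)$, which needs no $\operatorname{acl}$ at all: an open box is a product of $k$ infinite definable sets, each of dp-rank $1$ by dp-minimality, the array definition gives dp-rank $\geq k$ for such a product, and dp-rank is monotone under inclusion and definable images by Fact~\ref{fact:dp-rank-basic-1}. So the cyclic chain as you arranged it contains one false link and one link that consists of citing the theorem being proved.
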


\noindent
So in particular dp-rank agrees with the canonical dimension for definable sets in a $p$-adically closed field or an o-minimal expansion of an ordered abelian group.
It follows from Fact~\ref{fact:sw} that in this setting the dp-rank of $X$ depends only on $X$ and the topology, not on $\Sa M$.
We will also apply
Fact~\ref{fact:constructible}, proven in \cite{SW-tametop}.

\begin{fact}
\label{fact:constructible}
Suppose that $\Sa K$ is a dp-minimal expansion of a valued field.
Then every $\Sa K$-definable set is a boolean combination of closed $\Sa K$-definable sets.
\end{fact}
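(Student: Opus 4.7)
The plan is to induct on the dp-rank $k = \dim X$, which by Fact~\ref{fact:sw} coincides with the naive dimension of $X \subseteq K^n$. The base case $k = 0$ is immediate since finite sets are closed.

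For the inductive step, I would aim for a \emph{frontier dimension inequality}: if $X \subseteq K^n$ is definable and not closed then $\dim(\overline{X} \setminus X) < \dim X$. Granting this, one writes $X = \overline{X} \cap (K^n \setminus (\overline{X} \setminus X))$; the set $\overline{X}$ is closed, and $\overline{X} \setminus X$ is a boolean combination of closed definable sets by the outer inductive hypothesis, so its complement is as well, and hence so is $X$.

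To establish the frontier dimension inequality, I would run a second induction on the ambient dimension $n$. The unary case follows from dp-minimality of the underlying valued field: every definable subset of $K$ is a finite boolean combination of valuative swiss cheeses, and hence has finite topological frontier, giving the inequality at arity one. For $n > 1$, I would project along a coordinate map $\pi : K^n \to K^{n-1}$ chosen so that $\pi(X)$ realizes the naive dimension of $X$, analyze $X$ fiberwise, and assemble frontier bounds using Fact~\ref{fact:dp-rank-basic-1}(4) together with a generic continuity property for definable functions that one expects in any dp-minimal topological structure.

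The main obstacle is this fibered analysis. The frontier of $X$ decomposes into a vertical component (limits within a single fiber) and a horizontal component (limits arising from varying the base point), and the two interact via the behavior of the fiber-defining data at boundary points of $\pi(X)$. The vertical component is controlled by the inductive hypothesis on $n$ applied fiberwise, while the horizontal component requires knowing that fibers vary in a topologically tame way over a dense open subset of $\pi(X)$. Establishing this generic tameness in the absence of any order-theoretic cell decomposition, relying only on dp-minimality and the valuation topology, is the technical heart of the argument and is precisely what is carried out in \cite{SW-tametop}.
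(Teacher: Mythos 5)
The paper gives no proof of Fact~\ref{fact:constructible} at all: it is quoted directly from \cite{SW-tametop}, so the only fair comparison is with the argument in that reference, and your top-level skeleton is indeed exactly the derivation used there --- induct on dimension, write $X = \overline{X} \setminus (\overline{X}\setminus X)$, and reduce everything to the frontier inequality $\dim(\overline{X}\setminus X) < \dim X$. The problem is in what you supply beneath that skeleton. Your unary base case is false as stated: a dp-minimal expansion of a valued field need not be C-minimal, and its definable subsets of $K$ need not be boolean combinations of swiss cheeses. This fails already for $\Q_p$ itself: the set of squares is closed but not open (every neighborhood of $0$ contains the non-squares $p^{2k+1}$), while every finite boolean combination of balls and points agrees with a clopen set off a finite set, and near $0$ the squares contain all $p^{2k}$ and omit all $p^{2k+1}$, hence differ from every clopen set by an infinite set. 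The correct unary statement is only that a definable $X \subseteq K$ has finite frontier, and even that is not free: it rests on the nontrivial theorem that every infinite definable subset of $K$ has nonempty interior in this setting (given that, if the boundary of $X$ were infinite it would contain a ball $B$; then $X$ is dense in $B$, so $X \cap B$ has interior, contradicting that interior points of $X$ are not boundary points). That interior theorem is itself one of the results established in \cite{SW-tametop}.

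The second and decisive gap is that the inductive step of the frontier inequality --- where the entire content of the fact lives --- is not carried out. You describe the fibering strategy (project, split the frontier into vertical and horizontal contributions, invoke generic continuity of definable functions) and then concede that the generic tameness controlling the horizontal part is ``precisely what is carried out in \cite{SW-tametop}.'' Generic continuity and the frontier inequality \emph{are} the main theorems of that paper; quoting them makes your argument a correct reduction of constructibility to those theorems (which is all the paper under review does as well), but it is not a proof. There is also a small slip worth fixing: with the paper's definition of naive dimension, if $\dim X = k$ the witnessing coordinate projection maps onto a set with interior in $K^k$, not $K^{n-1}$, so the projection you induct along has to be chosen accordingly.
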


\section{Externally definable sets}
\noindent
Throughout this section $\Sa M \prec \Sa N$, $\Sa N$ is highly saturated, and $X \sub M^n$.
We say that $X$ is \textbf{externally definable} if $X = M^n \cap Y$ for some $\Sa N$-definable $Y$.
By saturation the collection of externally definable sets does not depend on choice of $\Sa N$.
We say that $\Sa M$ is \textbf{Shelah complete} if every externally definable set is definable.
The \textbf{Shelah completion} $\Sh M$ of $\Sa M$ is the structure induced on $M$ by $\Sa N$.
We say that $Y \subseteq N^n$ is an \textbf{honest definition} of $X$ if $Y$ is $\Sa N$-definable, $M^n \cap Y = X$, and whenever $Z \subseteq M^n$ is $\Sa M$-definable such that $Z \cap X = \emptyset$ then $Z(\Sa N) \cap Y = \emptyset$.
The second claim of Fact~\ref{fact:shelah-cs} is a theorem of Shelah~\cite{Shelah-external}.
The first is due to Chernikov and Simon~\cite{CS-I}.
The second claim is a corollary to the first.

\begin{fact}
\label{fact:shelah-cs}
Suppose $\Sa M$ is $\nip$.
Every externally definable subset has an honest definition.
Every $\Sh M$-definable set is externally definable.
\end{fact}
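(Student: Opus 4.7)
The plan is to deduce the second claim from the first by induction on $\Sh M$-formula complexity. Each atomic $\Sh M$-relation is by the very definition of $\Sh M$ an externally definable subset of some $M^k$, so it suffices to show that the class of externally definable sets (as $n$ varies) is closed under Boolean operations and coordinate projections. Closure under Boolean operations is immediate: if $X_i = M^n \cap Y_i$ with $Y_i \sub N^n$ being $\Sa N$-definable, then $X_1 \cap X_2 = M^n \cap (Y_1 \cap Y_2)$, $X_1 \cup X_2 = M^n \cap (Y_1 \cup Y_2)$, and $M^n \sm X_1 = M^n \cap (N^n \sm Y_1)$.

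For closure under projection, suppose $X \sub M^{n+1}$ is externally definable and let $\pi \colon M^{n+1} \to M^n$ and $\tilde\pi \colon N^{n+1} \to N^n$ be projection onto the first $n$ coordinates. Using the first claim, take an honest definition $Y \sub N^{n+1}$ of $X$. I would then argue that $\pi(X) = M^n \cap \tilde\pi(Y)$, which exhibits $\pi(X)$ as externally definable. The inclusion $\sub$ is clear from $X \sub Y$. For the reverse, suppose $a \in M^n \cap \tilde\pi(Y)$ while $a \notin \pi(X)$; then $W = \{a\} \times M$ is $\Sa M$-definable with parameter $a$ and disjoint from $X$, so the honest definition property forces $W(\Sa N) \cap Y = (\{a\} \times N) \cap Y = \emptyset$, contradicting $a \in \tilde\pi(Y)$. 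Thus the second claim follows from the first by a routine induction, as the paper itself indicates.

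The main obstacle is the first claim. The plan there is to exploit the fact that in an $\nip$ theory every formula $\psi(x;y)$ has finite VC dimension. Given $X = \phi(M^n, b)$ with $b \in N^{|y|}$, for each $L$-formula $\psi(x;z)$ I would consider the family of instances $\psi(M^n, a)$, $a \in M^{|z|}$, that are disjoint from $X$, and invoke the $(p,q)$-theorem for set systems of finite VC dimension to produce an $\Sa N$-definable $Y_\psi$ containing $X$ yet disjoint in $\Sa N$ from every such instance. A compactness and saturation argument would then combine the various $Y_\psi$ into a single honest definition. This VC-theoretic extraction, which is the content of the Chernikov--Simon theorem, is by far the hardest step; the reduction of claim two to claim one sketched above is elementary in comparison.
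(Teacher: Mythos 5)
Your proposal is correct and takes essentially the same route as the paper, which states this as a Fact citing Shelah~\cite{Shelah-external} and Chernikov--Simon~\cite{CS-I} and remarks only that the second claim is a corollary of the first: your honest-definition projection argument is exactly that standard deduction, and your sketch of the first claim via finite VC dimension and the $(p,q)$-theorem is the actual method of the cited Chernikov--Simon proof, which you rightly leave as the black box the paper itself treats it as.
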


\noindent
It follows easily from Fact~\ref{fact:shelah-cs} that the Shelah completion of an $\nip$ structure is Shelah complete, this justifies our terminology.
Shelah observed that Fact~\ref{fact:shelah-cs} implies the first claim of Fact~\ref{fact:shelah-1}.
The second claim is due to Onshuus and Usvyatsov\cite{OnUs}.

\begin{fact}
\label{fact:shelah-1}
If $\Sa M$ is $\nip$ then $\Sh M$ is $\nip$.
If $\Sa M$ is dp-minimal then $\Sh M$ is dp-minimal.
\end{fact}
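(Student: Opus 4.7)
The plan is to use Fact~\ref{fact:shelah-cs} to transfer any combinatorial witness of IP or of $\dim \geq 2$ in $\Sh M$ down to $\Sa N$, and then conclude via the fact that $\Sa N \equiv \Sa M$ inherits $\nip$ (resp.\ dp-minimality) from $\Sa M$, as both are properties of the theory. The key uniform translation is the following: given any $L^{\mathrm{Sh}}$-formula $\varphi(x;y)$, the graph $\{(a,b) \in M^{|x|+|y|} : \Sh M \models \varphi(a;b)\}$ is $\Sh M$-definable, hence by Fact~\ref{fact:shelah-cs} externally definable, so there exist an $L$-formula $\psi(x,y;z)$ and a parameter $c \in N^{|z|}$ such that
$$ \Sh M \models \varphi(a;b) \iff \Sa N \models \psi(a,b;c) \quad \text{for all } a \in M^{|x|},\ b \in M^{|y|}. $$

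For the first claim, suppose for contradiction that $\Sh M$ has IP, witnessed by some $\varphi(x;y)$, a sequence $(a_i)_{i<\omega}$ in $M^{|x|}$, and a family $(b_s)_{s \subseteq \omega}$ in $M^{|y|}$ with $\Sh M \models \varphi(a_i;b_s)$ iff $i \in s$. Applying the translation, $\Sa N \models \psi(a_i,b_s;c)$ iff $i \in s$, so the $L$-formula $\psi(x,y;z)$ has IP in $\Sa N$, contradicting that $\Sa N \equiv \Sa M$ is $\nip$.

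For the second claim, suppose $\dim_{\Sh M} M \geq 2$. By the finitary criterion of Section~\ref{section:dp-rank} there exist $L^{\mathrm{Sh}}$-formulas $\varphi_1(x_1;y), \varphi_2(x_2;y)$ with $|y| = 1$ admitting an $(\Sh M, M, (\varphi_1,\varphi_2), \{0,1\}, n)$-array for every $n \in \N$. Apply the translation separately to $\varphi_1$ and $\varphi_2$, obtaining $L$-formulas $\psi_\alpha(x_\alpha, y; z_\alpha)$ with parameters $c_\alpha \in N^{|z_\alpha|}$. The same arrays, interpreted in $\Sa N$ via the $L$-formulas $\psi_\alpha(x_\alpha, y; c_\alpha)$, witness $\dim_{\Sa N} N \geq 2$, contradicting the dp-minimality of $\Sa N$. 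The only bookkeeping issue is that the translation is applied row by row — Fact~\ref{fact:shelah-cs} handles one $\Sh M$-formula at a time — but this is harmless, since the $b \in M$ realizing each finitary pattern is automatically a legitimate parameter in $N$ by $M \subseteq N$.
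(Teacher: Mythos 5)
Your proof is correct and is essentially the argument the paper has in mind: the paper records Fact~\ref{fact:shelah-1} as a known consequence of Fact~\ref{fact:shelah-cs} (attributing the $\nip$ claim to Shelah's observation and the dp-minimal claim to Onshuus--Usvyatsov), and its own proof of Proposition~\ref{prop:1-ineq} --- which recovers $\dim \Sh M = \dim \Sa M$, hence the dp-minimality claim, by taking $X = N$ --- is exactly your translation of $L^{\mathrm{Sh}}$-formulas into $L$-formulas with parameters in $\Sa N$ followed by transferring the finitary arrays. One small imprecision worth noting: since $\Sh M$ need not be saturated, an IP formula only guarantees, for each $n$, finite shattered configurations $(a_i)_{i<n}$ and $(b_s)_{s \subseteq n}$ inside $M$ rather than a full $\omega$-indexed family, but your translation transfers each such finite configuration verbatim to $\Sa N$, which is all the contradiction with $\nip$ of $\operatorname{Th}(\Sa M)$ requires.
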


\noindent
The first claim of Fact~\ref{fact:external-basic} is elementary.
The second claim follows from the first, Fact~\ref{fact:shelah-cs}, and saturation.

\begin{fact}
\label{fact:external-basic}
Suppose $\Sa M \prec \Sa O$.
If $X \subseteq O^n$ is externally definable in $\Sa O$ then $M^n \cap X$ is externally definable in $\Sa M$.
If $\Sa M$ is $\nip$ then the structure induced on $M$ by $\Sh O$ is interdefinable with $\Sh M$.
\end{fact}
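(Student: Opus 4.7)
The plan for the first claim is to unpack the definition of externally definable. Fix a highly saturated $\Sa P \succ \Sa O$ and a $\Sa P$-definable $Y \subseteq P^n$ with $X = O^n \cap Y$. Since $\Sa M \prec \Sa O \prec \Sa P$ and $M \subseteq O$, one has $M^n \cap X = M^n \cap O^n \cap Y = M^n \cap Y$, which is externally definable in $\Sa M$ by definition. No $\nip$ hypothesis is needed.

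For the second claim, the strategy is to use Fact~\ref{fact:shelah-cs} to translate between ``$\Sh{}$-definable'' and ``externally definable'' on each side, and to fix a single sufficiently saturated elementary extension through which externally definable subsets of both $\Sa M$ and $\Sa O$ can be uniformly described. Concretely, fix $\Sa P \succ \Sa O$ sufficiently saturated; then $\Sa M \prec \Sa P$ as well, and by the remark that the collection of externally definable sets does not depend on the choice of saturated extension, externally definable subsets of $M^n$ (respectively $O^n$) are exactly the traces on $M^n$ (respectively $O^n$) of $\Sa P$-definable subsets of $P^n$.

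For one containment, take $\Sh O$-definable $X \subseteq O^n$; Fact~\ref{fact:shelah-cs} says $X$ is externally definable in $\Sa O$, and the first claim then yields that $M^n \cap X$ is externally definable in $\Sa M$, hence $\Sh M$-definable. Thus every relation of the structure induced on $M$ by $\Sh O$ is $\Sh M$-definable. For the reverse, any $\Sh M$-definable $Z \subseteq M^n$ is externally definable in $\Sa M$ by Fact~\ref{fact:shelah-cs}, so by the choice of $\Sa P$ we may write $Z = M^n \cap W$ for some $\Sa P$-definable $W \subseteq P^n$. Setting $X := O^n \cap W$, the set $X$ is externally definable in $\Sa O$ and hence $\Sh O$-definable, while $M^n \cap X = M^n \cap O^n \cap W = M^n \cap W = Z$, so $Z$ is a relation of the structure induced on $M$ by $\Sh O$.

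The argument is almost entirely formal, and I do not anticipate any serious obstacle. The one subtle point is to ensure a single $\Sa P$ is saturated enough to witness external definability for both $\Sa M$ and $\Sa O$ simultaneously; this is immediate from the saturation remark, and is what makes the two-sided translation go through cleanly.
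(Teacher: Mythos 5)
Your proposal is correct and follows exactly the route the paper indicates: the first claim via the elementary trace argument through a common highly saturated extension $\Sa P \succ \Sa O \succ \Sa M$, and the second claim by combining the first claim with Fact~\ref{fact:shelah-cs} and the saturation remark that external definability is independent of the choice of saturated extension. The paper leaves these details unproved (calling the first claim elementary and stating the second follows from the first, Fact~\ref{fact:shelah-cs}, and saturation), and your write-up is a faithful unpacking of precisely that sketch, including the one genuine subtlety of choosing a single $\Sa P$ saturated enough for both structures.
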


\noindent
Lemma~\ref{lem:completion} is easy and left to the reader.

\begin{lem}
\label{lem:completion}
Suppose that $\Sa M$ is $\nip$, $\Sa M$ is Shelah complete, and $\Sa O \prec \Sa M$.
Then every $\Sh O$-definable set is of the form $O^n \cap X$ for $\Sa M$-definable $X \subseteq M^n$.
\end{lem}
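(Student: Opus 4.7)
The plan is to derive this directly from the second clause of Fact~\ref{fact:external-basic} applied with the roles of $\Sa M$ and $\Sa O$ swapped, together with the Shelah completeness hypothesis; no new ingredient is needed.

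First, since $\Sa O \prec \Sa M$ and $\Sa M$ is $\nip$, the structure $\Sa O$ is $\nip$ as well. Applying Fact~\ref{fact:external-basic} with its ``$\Sa M$'' playing the role of our $\Sa O$ and its ``$\Sa O$'' playing the role of our $\Sa M$, we obtain that the structure induced on $O$ by $\Sh M$ is interdefinable with $\Sh O$. By hypothesis $\Sa M$ is Shelah complete, so every externally definable subset of $M^n$ is $\Sa M$-definable; hence $\Sh M$ and $\Sa M$ are interdefinable structures on $M$. Combining these two observations, the structure induced on $O$ by $\Sa M$ is interdefinable with $\Sh O$.

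Unpacking the definition of induced structure, this says exactly that a subset $Z \subseteq O^n$ is $\Sh O$-definable if and only if $Z = O^n \cap X$ for some $\Sa M$-definable $X \subseteq M^n$, which is the conclusion of the lemma. There is no real obstacle here; the only thing to verify is that Fact~\ref{fact:external-basic} is legitimately being applied in the direction $\Sa O \prec \Sa M$, but its statement is symmetric in the sense that it merely requires an elementary pair with the smaller structure $\nip$, which is satisfied.
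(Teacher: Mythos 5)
Your first three steps are sound: $\Sa O$ is $\nip$ because it is elementarily equivalent to $\Sa M$; Fact~\ref{fact:external-basic} does apply with the roles swapped (its hypothesis is just an elementary pair with the smaller structure $\nip$), giving that the structure induced on $O$ by $\Sh M$ is interdefinable with $\Sh O$; and Shelah completeness does make $\Sh M$ interdefinable with $\Sa M$ (each atomic relation of $\Sh M$ is externally definable, hence $\Sa M$-definable, and conversely every $\Sa M$-definable set is trivially externally definable). The gap is in your final ``unpacking'' step. Interdefinability of $\Sh O$ with the structure induced on $O$ by $\Sa M$ says only that every $\Sh O$-definable set is \emph{definable in} that induced structure, i.e., obtainable from the traces $O^n \cap X$ by boolean operations, parameters from $O$, and quantification over $O$. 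It does not say by definition that every such set \emph{is} a trace. Traces are closed under boolean combinations, but there is no formal reason that a projection $\{a \in O^n : \exists b \in O \ (a,b) \in X\}$ of a trace should again equal $O^n \cap X'$ for some $\Sa M$-definable $X'$; that closure property is essentially the content of the lemma itself, so asserting it as a definition-chase makes the argument circular at exactly the point that needs proof.

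To close the gap you need one more application of Fact~\ref{fact:shelah-cs}: since $\Sa O$ is $\nip$, every $\Sh O$-definable $Z \subseteq O^n$ is externally definable in $\Sa O$. Since (as the paper notes) external definability does not depend on the choice of saturated extension, choose $\Sa N$ highly saturated with $\Sa M \prec \Sa N$, so also $\Sa O \prec \Sa N$, and write $Z = O^n \cap Y$ with $Y$ an $\Sa N$-definable set. Then $X := M^n \cap Y$ is externally definable in $\Sa M$, hence $\Sa M$-definable by Shelah completeness, and $Z = O^n \cap X$ because $O \subseteq M$. Note that once you run this argument, the detour through the second clause of Fact~\ref{fact:external-basic} becomes superfluous: these three lines are a complete proof, and are presumably the ``easy'' argument the paper has in mind, since it states the lemma without proof and leaves it to the reader.
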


\section{The first inequality}
\noindent
We generalize the first inequality to arbitrary $\nip$ structures.

\begin{prop}
\label{prop:1-ineq}
Suppose that $\Sa M$ is $\nip$, $\Sa M \prec \Sa N$, and $X \subseteq N^m$ is $\Sa N$-definable.
Then $\dim_{\Sh M} M^n \cap X \leq \dim_{\Sa N} X$
\end{prop}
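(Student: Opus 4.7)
The plan is to work with the finitary-array characterization of dp-rank, so no further elementary extension of $\Sa N$ is needed, and to translate each $L^{\mathrm{Sh}}$-formula used in an $\Sh M$-witness into a pure $L$-formula in $\Sa N$ by absorbing the external parameters into the formula-side variables rather than into $y$.

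Suppose $\dim_{\Sh M}(M^m \cap X) \geq \lambda$, and fix a witnessing sequence $\Phi = (\varphi_\alpha(x_\alpha; y) : \alpha < \lambda)$ of $L^{\mathrm{Sh}}$-formulas. Since $\Sh M$ is by definition the structure induced on $M$ by $\Sa N$, for each $\alpha$ I can pick an $L$-formula $\psi_\alpha(x_\alpha, z_\alpha; y)$ and a tuple $c_\alpha \in N^{|z_\alpha|}$ with
$$ \Sa M \models \varphi_\alpha(a; b) \quad \Longleftrightarrow \quad \Sa N \models \psi_\alpha(a, c_\alpha; b) $$
for all $a \in M^{|x_\alpha|}, b \in M^{|y|}$. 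Set $\Psi := (\psi_\alpha(x_\alpha, z_\alpha; y) : \alpha < \lambda)$; this is a sequence of pure $L$-formulas whose parameter-side tuple is the enlarged $(x_\alpha, z_\alpha)$.

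Now fix a finite $F \subseteq \lambda$ and $n \in \N$. By hypothesis there is an $(\Sh M, M^m \cap X, \Phi, F, n)$-array $(a_{\alpha,i})$. Define $a'_{\alpha,i} := (a_{\alpha,i}, c_\alpha)$. For each $f : F \to n$ the $\Sh M$-array supplies some $b \in M^m \cap X \subseteq X$ with $\Sa M \models \varphi_\alpha(a_{\alpha,i}; b) \Lrarr f(\alpha) = i$; the equivalence above then yields $\Sa N \models \psi_\alpha(a'_{\alpha,i}; b) \Lrarr f(\alpha) = i$ for all $\alpha \in F, i \leq n$. So $(a'_{\alpha, i})$ is an $(\Sa N, X, \Psi, F, n)$-array, proving $\dim_{\Sa N} X \geq \lambda$. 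As $\lambda$ was arbitrary, this forces $\dim_{\Sh M}(M^m \cap X) \leq \dim_{\Sa N} X$.

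I do not expect a genuine obstacle. The one point that requires a moment of care is the choice to absorb each $c_\alpha$ into the $x_\alpha$-tuple: placing it in $y$ would change the ambient space of witnesses and destroy the inclusion $M^m \cap X \subseteq X$ that lets us reuse the original witnesses $b$. The passage from ``$\dim \geq \lambda$ is preserved for every $\lambda$'' to the inequality $\dim_{\Sh M}(M^m \cap X) \leq \dim_{\Sa N} X$ is then immediate from the definition of $\dim$ as the supremum (or supremum minus one) of the set of cardinals $\lambda$ for which $\dim \geq \lambda$. Note also that the hypothesis that $\Sa M$ is $\nip$ enters only through Fact~\ref{fact:shelah-1}, which guarantees that dp-rank in $\Sh M$ is a meaningful notion in the first place.
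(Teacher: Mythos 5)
Your overall strategy is exactly the paper's: transfer finitary arrays from $\Sh M$ to the extension by translating each $L^{\mathrm{Sh}}$-formula into an external $L$-formula and reusing the same witnesses $b \in M^m \cap X \subseteq X$ (your explicit absorption of the parameters $c_\alpha$ into the $x_\alpha$-side, rather than the $y$-side, is just making explicit what the paper's formulas $\theta_\alpha(x_\alpha;y)$ carry implicitly, and your remark about why the $y$-side would be wrong is correct). However, the pivotal step --- ``since $\Sh M$ is by definition the structure induced on $M$ by $\Sa N$, I can pick $\psi_\alpha$ and $c_\alpha \in N^{|z_\alpha|}$'' --- has a genuine gap, on two counts. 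First, the proposition assumes only $\Sa M \prec \Sa N$, whereas $\Sh M$ is defined as the structure induced by a \emph{highly saturated} extension; for a small $\Sa N$ the traces of $\Sa N$-definable sets may miss externally definable sets entirely. For instance, with $\Sa M = (\mathbb{Q},<)$ and $\Sa N$ an extension realizing no irrational cut of $\mathbb{Q}$, the set $\{q \in \mathbb{Q} : q < \sqrt{2}\}$ is $\Sh M$-definable but is not $M \cap Y$ for any $\Sa N$-definable $Y$. So your opening claim that ``no further elementary extension of $\Sa N$ is needed'' is mistaken: the finitary characterization spares you an extension for \emph{witnessing} dp-rank, but not for the translation step. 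The paper handles this in its first line, replacing $\Sa N$ by a highly saturated $\Sa O \succ \Sa N$ after observing that $M^m \cap X = M^m \cap X(\Sa O)$ and that $\dim_{\Sa N} X$ is unchanged (dp-rank of a definable set is invariant under elementary extension, as is immediate from the definition given).

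Second, even once $\Sa N$ is highly saturated, your $\varphi_\alpha$ are arbitrary $L^{\mathrm{Sh}}$-formulas, possibly with quantifiers ranging over $M$. That the sets such formulas define are again traces of $\Sa N$-definable sets is \emph{not} definitional: traces are closed under Boolean combinations, but existential quantification over $M$ does not commute with taking traces. This is exactly the content of Shelah's theorem (the second claim of Fact~\ref{fact:shelah-cs}), which is what the paper cites at this point and which is where the $\nip$ hypothesis actually enters. Your closing sentence attributing the use of $\nip$ solely to Fact~\ref{fact:shelah-1} therefore misplaces the hypothesis --- Fact~\ref{fact:shelah-1} is not needed at all, since dp-rank is defined for any structure. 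With these two repairs (pass to a highly saturated extension; invoke Fact~\ref{fact:shelah-cs} for the translation), your array-transfer argument coincides with the paper's proof.
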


\noindent
Taking $X = M$ we get $\dim \Sa M = \dim \Sh M$.
So Proposition~\ref{prop:1-ineq} generalizes Fact~\ref{fact:shelah-1}.
The proof below is essentially the same as Onshuus and Usvyatsov's proof that $\Sh M$ is dp-minimal when $\Sa M$ is dp-minimal~\cite{OnUs}. 

\begin{proof}
Let $L^{\mathrm{Sh}}$ be the language of $\Sh M$.
If $\Sa N \prec \Sa O$ is highly saturated then we have $M^m \cap X = M^m \cap X(\Sa O)$, so after possibly replacing $\Sa N$ with $\Sa O$ we suppose that $\Sa N$ is highly saturated.
Let $Y := M^m \cap X$ and $\lambda$ be a cardinal.
Suppose that $\dim_{\Sh M} Y \geq \lambda$.
Let $|y| = m$ and fix a sequence $\Phi := (\varphi_\alpha(x_\alpha;y) : \alpha < \lambda )$ of $L^{\mathrm{Sh}}$-formulas such that for every finite $F \subseteq \lambda$ and $n$ there is a $(\Sh M, Y,\Phi,F,n)$-array.
By Fact~\ref{fact:shelah-cs} we have for each $\alpha \leq \lambda$ an $L$-formula $\theta_\alpha(x_\alpha;y)$ such that 
$$ \Sa M \models \varphi_\alpha(a;b) \quad \text{if and only if} \quad \Sa N \models \theta_\alpha(a;b) \quad \text{for all} \quad a \in M^{|x_\alpha|}, b \in M^{m}. $$
Fix finite $F \subseteq \lambda$ and $n$.
Let $\Theta$ be the sequence $(\theta_\alpha(x_\alpha;y) : \alpha < \lambda)$.
Observe that if $\Cal A := (a_{\alpha,i} \in M^{|x_\alpha|} : \alpha \in F, i \leq n)$ is an $(\Sh M,Y,\Phi,F,n)$-array then $\Cal A$ is also an $(\Sa N,X,\Theta,F,n)$-array.
So $\dim_{\Sa N} X \geq \lambda$.
\end{proof}

\section{The second inequality}
\label{section:results}
\noindent
We generalize the second inequality.
The results of this section are easily adapted to expansions of divisible ordered abelian groups, we leave that to the reader.
\newline

\noindent
Let $(K,\val)$ be a valued field, $\Sa K$ be an expansion of $(K,\val)$, and $\Sa K \prec \Sa L$.
Let $V$ be the set of $a \in L$ such that $\val(a) \geq \val(b)$ for some $b \in K$.
Then $\Sa L$ is a \textbf{tame extension} of $\Sa K$ if for every $a \in V$ there is $b \in K$ such that $\val(a - b) \geq \val(a - b')$ for all $b' \in K$.
It is easy to see that $b$ must be unique, so if $\Sa K \prec \Sa L$ is tame then we let $\st : L \to K$ be the map taking each $a$ to the unique $b = \st(a)$ with this property.
If $(K,\val)$ is locally compact then any elementary extension is tame.
\newline

\noindent
\textbf{In this section $\Sa K$ is $\nip$ and $\Sa K \prec \Sa L$ is tame}.
Let $\st(\Sa L)$ be the structure on $K$ with an $m$-ary relation defining $\st(V^m \cap X)$ for each $\Sa L$-definable $X \subseteq L^m$.

\begin{prop}
\label{prop:induced-0}
If $X \subseteq L^m$ is $\Sa L$-definable then $\dim_{\st(\Sa L)} \st(V^m \cap X) \leq \dim_{\Sa L} X$.
\end{prop}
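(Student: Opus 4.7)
The plan is to follow the blueprint of Proposition~\ref{prop:1-ineq}, replacing Shelah-style honest definitions by an $\Sa L$-definable ``thickening'' operation applied to each atomic predicate $R_Z := \st(V^{|y|} \cap Z)$ in the language of $\st(\Sa L)$. Concretely, an array witnessing $\dim_{\st(\Sa L)} \st(V^m \cap X) \geq \lambda$ will be transferred, by lifting its witnesses from $K^m$ into $V^m \cap X$ via $\st$, to an array witnessing $\dim_{\Sa L} X \geq \lambda$.

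After possibly replacing $\Sa L$ by a sufficiently saturated elementary extension, fix $\Phi = (\varphi_\alpha(x_\alpha; y) : \alpha < \lambda)$ witnessing $\dim_{\st(\Sa L)} \st(V^m \cap X) \geq \lambda$, and for given finite $F \subseteq \lambda$ and $n \in \N$, a corresponding array $(a_{\alpha,i} \in K^{|x_\alpha|})$ together with witnesses $b^f \in \st(V^m \cap X)$ for each $f : F \to n$. Lift each $b^f$ to some $c^f \in V^m \cap X$ with $\st(c^f) = b^f$. The central technical point is the following \emph{closedness lemma}: for every $\Sa L$-definable $Z \subseteq L^k$, the set $R_Z \subseteq K^k$ is closed in the valuation topology. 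Indeed, if $b \in K^k \setminus R_Z$ then the partial type asserting $z \in Z$ together with $\val(z_i - b_i) > \val(d)$ for every $i$ and every $d \in K^\times$ is inconsistent in $\Sa L$; compactness then produces $N \in \val(K^\times)$ such that no $z \in Z$ satisfies $\val(z_i - b_i) > N$ for all $i$, yielding a ball around $b$ disjoint from $R_Z$.

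For each atomic $R_W(t)$ occurring in some $\varphi_\alpha$ (with $W \subseteq L^{|t|}$ an $\Sa L$-definable set), define the $\Sa L$-definable thickening
\[
W^{(N)}(t) \;:=\; \exists w \, \Bigl( w \in W \wedge \bigwedge_j \val(t_j - w_j) > N \Bigr),
\]
and obtain $\theta_\alpha$ from $\varphi_\alpha$ by replacing each atomic $R_W(t)$ by $W^{(N)}(t)$. If $N \in \val(K^\times)$ is chosen larger than the (finitely many) closedness bounds coming from the pairs $(a_{\alpha,i}, b^f)$ and atomic subformulas for which $\neg R_W(a_{\alpha,i}, b^f)$ holds, then a short valuation computation (using that $c^f - b^f$ is infinitesimal over $K$) shows $\Sa L \models \theta_\alpha(a_{\alpha,i}; c^f) \Longleftrightarrow \st(\Sa L) \models \varphi_\alpha(a_{\alpha,i}; b^f) \Longleftrightarrow f(\alpha) = i$. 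Thus $(a_{\alpha,i})$ together with the lifts $(c^f)$ is an $(\Sa L, X, \Theta, F, n)$-array, and $\dim_{\Sa L} X \geq \lambda$.

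The main obstacle is handling atomic predicates occurring under quantifiers in $\varphi_\alpha$. Existentials over $K$ pose no difficulty: $\exists z\, R_Z(x, z, y)$ is equivalent to $R_{\pi(Z)}(x, y)$ where $\pi$ projects out the $z$-coordinate, so existentials are already absorbed at the atomic level. For universal quantifiers and more intricate Boolean structure, one must verify that a single $N$ simultaneously controls every atomic subformula across all finitely many array pairs; this is possible because the array is finite and each individual atomic contributes only finitely many closedness bounds.
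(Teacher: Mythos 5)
Your quantifier step is where the argument breaks. The absorption claim $\exists z\, R_Z(x,z,y) \equiv R_{\pi(Z)}(x,y)$ is false: a point of $V^{k-1} \cap \pi(Z)$ lifts to a point of $Z$ whose projected-out coordinate need not lie in $V$, and then there is no standard part to take. Concretely, let $Z = \{(x,z) \in L^2 : xz = 1\}$ and let $\pi$ project onto the first coordinate. If $(x,z) \in V^2 \cap Z$ then $\val(x)$ lies in the convex hull $O$ of $\Gamma_K$, so $\st(x) \neq 0$; hence $R_Z = \{(a,a^{-1}) : a \in K^\times\}$ and $\exists z\, R_Z(x,z)$ defines $K^\times$, while $\pi(Z) = L^\times$ and $R_{\pi(Z)} = \st(V \cap L^\times) = K$ (as $0 = \st(\epsilon)$ for any nonzero $\epsilon \in \mfrak$, which exists e.g.\ in any proper elementary extension of $\Q_p$). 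Worse, when $\Sa L$ is saturated your own closedness lemma shows every $R_W$ is closed, while $K^\times$ is not, so $\exists z\, R_Z$ is not equivalent to \emph{any} atomic $R_{Z'}$: the family of atomics is not closed under projection. Thus your thickening device only treats quantifier-free $\varphi_\alpha$, whereas the array definition of dp-rank allows arbitrary formulas of $\st(\Sa L)$. Nor is the remaining issue mere bookkeeping: once a quantified variable ranges over $K$, the atomic instances $R_W(a_{\alpha,i}, z, b^f)$ to be controlled are indexed by infinitely many $z \in K$, each with its own closedness bound, so no single $N$ exists; the finiteness of the array does not help.

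There is a second gap at the start. The closedness lemma genuinely needs saturation: if $b \notin R_Z$, the partial type you write down is merely not \emph{realized} in $\Sa L$; it can still be finitely satisfiable, hence consistent, in which case compactness yields no bound $N$ and $R_Z$ is in fact not closed ($b$ is an omitted closure point). And, unlike in Proposition~\ref{prop:1-ineq} where $M^m \cap X$ is unchanged, replacing $\Sa L$ by a saturated $\Sa L' \succ \Sa L$ is not harmless here: the relations $R_Z$ can strictly grow, exactly at such omitted closure points, so the $\st(\Sa L)$-array you fixed need not remain an array for the new relations, and a negative instance $\neg R_W(a_{\alpha,i};b^f)$ that your construction must preserve can simply become positive, destroying the bound $N_0$; tameness of $K \prec \Sa L'$ is also not automatic outside the locally compact case. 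The paper sidesteps both problems at once: $O$ is a convex subset of $\Gamma_L$, hence externally definable, so $V$, $\mfrak$, and therefore $\st : V^m \to K^m$ (with $K$ realized as the imaginary sort $V/\mfrak$) are $\Sh L$-definable; every $\st(\Sa L)$-formula is then interpreted in $\Sh L$, and the inequality is immediate from Fact~\ref{fact:dp-rank-basic-1}(3) together with $\dim_{\Sh L} X \leq \dim_{\Sa L} X$ (Proposition~\ref{prop:1-ineq}). To salvage your syntactic route you would in effect need that every $\st(\Sa L)$-definable set is externally definable over $\Sa L$, which is the content of Proposition~\ref{prop:induced-00} and is obtained by this definable-map observation rather than by thickening.
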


\noindent
So in particular $\st(\Sa L)$ is $\nip$ and $\st(\Sa L)$ is dp-minimal when $\Sa K$ is dp-minimal.
\newline

\noindent
It is easy to see that $V$ is a subring of $L$ and if $a \in L \setminus V$ then $1/a \in V$, so $V$ is a valuation subring of $L$.
The maximal ideal $\mfrak$ of $V$ is the set of $a \in L$ such that $\val(a) \geq \val(b)$ for all $b \in K^\times$.
Observe that $\{\st(a)\} = (a + \mfrak) \cap K$ for all $a \in V$, so we may identify $K$ with $V/\mfrak$.
It is easy to see that $\st : V \to K$ is the residue map.
We describe the associated valuation.
Let $\Gamma_{K},\Gamma_{L}$ be the value group of $(K,\val), (L,\val)$, respectively.
Let $O$ be the convex hull of $\Gamma_{K}$ in $\Gamma_{L}$ and $w$ be the valuation on $L$ given by composing $\val$ with the quotient $\Gamma_{L} \to \Gamma_{K}/O$.
Then $V$ is the valuation ring of $w$.
We now prove Proposition~\ref{prop:induced-0}.

\begin{proof}
By definition $O$ is a convex subset of $\Gamma_{L}$ so $O$ is definable in $\Sh L$.
So $w$ is an $\Sh L$-definable valuation and we can regard $K$ as an imaginary sort of $\Sh L$, thus $\st : V^m \to K^m$ is $\Sh L$-definable.
The proposition now follows from Fact~\ref{fact:dp-rank-basic-1}$(3)$.
\end{proof}

\noindent
What is not clear at the moment is how $\st(\Sa L)$ relates to $\Sa K$.

\begin{prop}
\label{prop:induced-00}
$\st(\Sa L)$ is a reduct of $\Sh K$.
\end{prop}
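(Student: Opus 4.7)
The plan is to combine the $\Sh L$-definability of the valuation ring $V$ and its maximal ideal $\mfrak$, already established inside the proof of Proposition~\ref{prop:induced-0}, with Fact~\ref{fact:external-basic}. Since $\Sa K$ is $\nip$ and $\Sa K \prec \Sa L$, Fact~\ref{fact:external-basic} gives that the structure induced on $K$ by $\Sh L$ is interdefinable with $\Sh K$. So it will suffice to show that for every $\Sa L$-definable $X \subseteq L^m$ the set $\st(V^m \cap X) \subseteq K^m$ has the form $K^m \cap Z$ for some $\Sh L$-definable $Z \subseteq L^m$.

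To produce $Z$, I would take
$$ Z \;:=\; \{\, c \in V^m : (c + \mfrak^m) \cap X \neq \emptyset \,\} \;\subseteq\; L^m. $$
Here $V$ and $\mfrak$ are $\Sh L$-definable by the proof of Proposition~\ref{prop:induced-0} (where $V$ is exhibited as the valuation ring of the $\Sh L$-definable coarsening $w$), so $Z$ is $\Sh L$-definable. A short verification then gives $K^m \cap Z = \st(V^m \cap X)$: for $c \in K^m \subseteq V^m$, membership $c \in Z$ amounts to the existence of $a \in V^m \cap X$ with $a - c \in \mfrak^m$, which by the definition of $\st$ is the same as $c = \st(a)$ for some $a \in V^m \cap X$.

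Combining these two steps, every basic relation of $\st(\Sa L)$ is $\Sh K$-definable, so $\st(\Sa L)$ is a reduct of $\Sh K$. I do not expect a real obstacle: the heavy lifting, identifying $V$ and $\mfrak$ as $\Sh L$-definable, was already carried out for Proposition~\ref{prop:induced-0}, and Fact~\ref{fact:external-basic} is exactly the transfer mechanism from $\Sh L$ down to $\Sh K$. The only genuinely new point is picking $Z$ so that $K^m \cap Z$ is exactly $\st(V^m \cap X)$, and the natural candidate above works.
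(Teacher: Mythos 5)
Your proposal is correct and is essentially the paper's own argument: your $Z$ is just the paper's $Z := X + \mfrak^m$ intersected with $V^m$ (which is immaterial after intersecting with $K^m$), and both proofs then conclude via the $\Sh L$-definability of $V$ and $\mfrak$ from Proposition~\ref{prop:induced-0} together with Fact~\ref{fact:external-basic}. The set-theoretic verification that $K^m \cap Z = \st(V^m \cap X)$ is the same in both.
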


\begin{proof}
Suppose $Y \subseteq L^m$ is $\Sa L$-definable.
Let $Z := Y + \mfrak^m$, so $Z$ is $\Sh L$-definable.
Note that $\st(V^m \cap Y) = Z \cap K^m$.
So $\st(V^m \cap Y)$ is $\Sh K$-definable by Fact~\ref{fact:external-basic}.
\end{proof}

\noindent
In general $\Sa K$ is not a reduct of $\st(\Sa L)$.
By \cite{big-nip} $\st(\Sa L)$ cannot define a subset of $\Q^m_p$ which is dense and co-dense in a nonempty open set, but there are $\nip$ expansions of $\Q_p$ which define such sets.
For example Mariaule~\cite{Ma-adic} shows that if $H$ is a dense finitely generated subgroup of $(1 + p\Z_p,\times)$ then $(\Q_p,H)$ is $\nip$.
We expect that in this case $\st(\Sa L)$ is interdefinable with $\Q_p$ but we have not carefully checked this.

\begin{prop}
\label{prop:induced-1}
Suppose that $\Sa K$ is dp-minimal.
Then $\Sa K$ is a reduct of $\st(\Sa L)$ and $\Sh K$ is interdefinable with $\st(\Sh L)$.
\end{prop}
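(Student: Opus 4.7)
The plan is to prove the two assertions separately, both reducing via Fact~\ref{fact:constructible} to closed definable sets and exploiting the following uniform topological fact: any $a \in V^m$ satisfies $a - \st(a) \in \mfrak^m$, whence $a$ lies in every $\Sa K$-definable open polydisc around $\st(a)$ of radii in $\Gamma_K$.

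For the first assertion I will establish the concrete claim that $\st(V^m \cap Y(\Sa L)) = Y$ for every closed $\Sa K$-definable $Y \subseteq K^m$. The inclusion $\supseteq$ is immediate. For the reverse, if $a \in V^m \cap Y(\Sa L)$ had $\st(a) \notin Y$, closedness of $Y$ would give an $\Sa K$-definable open polydisc $U$ around $\st(a)$ disjoint from $Y$; elementarity then yields $U(\Sa L) \cap Y(\Sa L) = \emptyset$, while the topological fact above places $a \in U(\Sa L)$, a contradiction. Fact~\ref{fact:constructible} boosts this from closed $\Sa K$-definable sets to all $\Sa K$-definable sets, so $\Sa K$ is a reduct of $\st(\Sa L)$.

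For the second assertion, the inclusion $\st(\Sh L) \subseteq \Sh K$ is obtained by running the proof of Proposition~\ref{prop:induced-00} verbatim with $\Sh L$ in place of $\Sa L$: $\mfrak$ is $\Sh L$-definable, so $Y + \mfrak^m$ is $\Sh L$-definable whenever $Y$ is, and $\st(V^m \cap Y) = K^m \cap (Y + \mfrak^m)$ is $\Sh K$-definable by Fact~\ref{fact:external-basic}. For the reverse inclusion, Fact~\ref{fact:shelah-1} gives that $\Sh K$ is dp-minimal, so Fact~\ref{fact:constructible} again reduces to showing each closed $\Sh K$-definable $Y$ is $\st(\Sh L)$-definable. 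I plan to rerun the argument of the first assertion with $Y(\Sa L)$ replaced by a suitable $\Sh L$-definable $Y'$ satisfying $K^m \cap Y' = Y$ together with the honest-definition-like condition that $Z \cap Y = \emptyset$ for $\Sa K$-definable $Z$ forces $Z(\Sa L) \cap Y' = \emptyset$; the polydisc argument then gives $Y = \st(V^m \cap Y')$.

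The construction of $Y'$ is the delicate step. My plan is to pass to a highly saturated $\Sa L \prec \Sa L^*$ and apply Fact~\ref{fact:shelah-cs} inside $\Sa L^*$ to the externally $\Sa K$-definable set $Y$, producing an honest definition $Y^{\dagger} \subseteq L^{*m}$ of $Y$. Setting $Y' := L^m \cap Y^{\dagger}$ makes $Y'$ externally $\Sa L$-definable and therefore $\Sh L$-definable; $K^m \cap Y' = K^m \cap Y^{\dagger} = Y$; and for any $\Sa K$-definable $Z$ with $Z \cap Y = \emptyset$ the honest property in $\Sa L^*$ gives $Z(\Sa L^*) \cap Y^{\dagger} = \emptyset$, whence $Z(\Sa L) \cap Y' \subseteq Z(\Sa L^*) \cap Y^{\dagger} = \emptyset$. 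Everything else is the topological calculation already carried out in the first assertion.
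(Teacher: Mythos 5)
Your proposal is correct and follows essentially the same route as the paper: reduce to closed sets via Fact~\ref{fact:constructible} (with Fact~\ref{fact:shelah-1} for the Shelah-completion direction), use an honest definition from Fact~\ref{fact:shelah-cs} in a highly saturated $\Sa L \prec \Sa L^{*}$ for the externally definable case, use plain elementarity with $Y(\Sa L)$ for the $\Sa K$-definable case, and finish with the same standard-part/ball argument (your disjoint-polydisc contradiction is just the contrapositive of the paper's ``every ball about $p$ meets $X$'' step). The only differences are presentational: you restrict the honest definition to $L$ and transfer its defining property down before arguing, where the paper argues directly at the $\Sa L^{*}$ level, and you spell out the adaptation of Proposition~\ref{prop:induced-00} to $\Sh L$ that the paper asserts in one sentence.
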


\begin{proof}
The proof of Proposition~\ref{prop:induced-00} shows that $\st(\Sh L)$ is a reduct of $\Sh K$.
We first show that $\Sh K$ is a reduct of $\st(\Sh L)$.
Suppose that $X \subseteq K^n$ is $\Sh K$-definable.
We show that $X$ is $\st(\Sh L)$-definable.
By Facts~\ref{fact:shelah-1} and
\ref{fact:constructible} we may suppose that $X$ is closed.
Let $\Sa L \prec \Sa N$ be highly saturated, $Z \subseteq N^n$ be an honest definition of $X$, and $Y := L^n \cap Z$.
So $Y$ is $\Sh L$-definable, we show that $\st(V^n \cap Y) = X$.
As $X \subseteq Y$ and $\st$ is the identity of $K^n$ we have $X \subseteq \st(V^n \cap Y)$.
Fix $p \in \st(V^n \cap Y)$.
We show that $p \in X$.
As $X$ is closed it suffices to fix a $\val$-ball $B \subseteq K^n$ containing $p$ and show that $B \cap X \neq \emptyset$.
Fix $q \in V^n \cap Y$ such that $\st(q) = p$.
Then $q \in B(\Sa N)$ so $B(\Sa N) \cap Z\neq \emptyset$.
As $Z$ is honest $B \cap X$ is nonempty.
\newline

\noindent
It remains to show that $\Sa K$ is a reduct of $\st(\Sa K)$.
Suppose $X \subseteq K^n$ is $\Sa K$-definable.
By Fact~\ref{fact:constructible} we may suppose $X$ is closed.
Let $Y$ be the subset of $L^n$ defined by the same formula as $X$.
The proceeding paragraph shows that $\st(V^n \cap Y) = X$.
\end{proof}

\section{Delon's Theorem}
\label{section:delon}
\noindent
Fact~\ref{fact:henselian} is a well-known consequence of Pas's quantifier elimination~\cite{Pas}.

\begin{fact}
\label{fact:henselian}
Let $(M,v)$ be a Henselian valued field of equicharacteristic zero with residue field $R$.
Every $(M,v)$-definable subset every of $R^m$ is $R$-definable.
\end{fact}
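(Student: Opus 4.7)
The plan is to apply Pas's quantifier elimination theorem for Henselian valued fields of equicharacteristic zero. After harmlessly passing to an elementary extension if necessary, which does not change which subsets of $R^m$ are $R$-definable, I work in the three-sorted Denef--Pas language, with sorts for $M$, the value group $\Gamma$, and the residue field $R$, equipped with the valuation $v : M^\times \to \Gamma$ and an angular component map $\mathrm{ac} : M^\times \to R^\times$.

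Let $X \subseteq R^m$ be defined in $(M,v)$ by an $L$-formula $\psi(\overline{y})$ with parameters from $M$, where $\overline{y}$ is a tuple of residue field variables. Pas's theorem replaces $\psi$ by an equivalent formula $\psi'(\overline{y})$ with no valued field quantifiers. Any valued field parameters of $\psi$ enter $\psi'$ only through their angular components, which lie in $R$, and their valuations, which lie in $\Gamma$. Thus $\psi'$ can be read as a formula in the two-sorted language of $R$ and $\Gamma$, with free variables in $R$ and parameters in $R \cup \Gamma$.

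In this two-sorted language every atomic formula involves terms from a single sort, so quantifiers in $\psi'$ can be pushed inward to express $\psi'$ as a boolean combination of residue field formulas in $\overline{y}$ with parameters in $R$ and value group sentences with parameters in $\Gamma$. Each value group sentence evaluates to either true or false in $(M,v)$; substituting each such truth value yields a pure residue field formula with parameters in $R$ that defines $X$, as required.

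The main technical input is Pas's theorem, which the statement already invokes; the remaining steps consist of bookkeeping and the standard separation of the residue field and value group sorts in equicharacteristic zero. I do not anticipate a substantive obstacle beyond unpacking the definitions. An alternative formulation in the same spirit is to note that the proposition is exactly the statement that the residue field of an equicharacteristic zero Henselian valued field is stably embedded with its pure field structure, which again follows from Pas's QE (or an Ax--Kochen--Ershov argument).
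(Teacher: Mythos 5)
Your argument is correct and is exactly the proof the paper intends: the paper records this statement as a bare Fact, citing Pas's quantifier elimination with no further detail, and your derivation (pass to an elementary extension carrying an angular component map, eliminate valued-field quantifiers, separate the now-orthogonal residue-field and value-group sorts, and evaluate the value-group sentences to truth values) is the standard unpacking of that citation. The only step stated too glibly is the reduction to the elementary extension, since the residue field itself changes to some $R' \succ R$; the clean fix is to note that once $\varphi(\bar y,\bar a)$ is equivalent in the extension to a pure residue-field formula $\psi(\bar y,\bar r\,')$, applying elementarity to the sentence $\exists \bar r\, \forall \bar y\, \bigl(\varphi(\bar y,\bar a) \leftrightarrow \psi(\bar y,\bar r)\bigr)$ pulls the parameters back into $R$.
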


\noindent
We now give a proof of Delon's theorem that $\Q_p$ is Shelah complete.

\begin{proof}
Let $\pfield \prec \Sa L$ be highly saturated.
So $\Sa L$ is a tame extension as $\pfield$ is locally compact.
Let $w$ be the valuation on $\Sa N$ with residue map $\st$.
By the observations above $w$ is a coarsening of the $p$-adic valuation on $L$, so $w$ is Henselian as a coarsening of a Henselian valuation is always Henselian.
An application of Fact~\ref{fact:henselian} shows that $\st(\Sa L)$ is interdefinable with $\pfield$.
Slight modifications to the proof of Proposition~\ref{prop:induced-1} show that $\pfield^{\mathrm{Sh}}$ and $\st(\Sa L)$ are interdefinable.
\end{proof}

\noindent
A subfield of $\Q_p$ is an elementary substructure if and only if it is algebraically closed in $\Q_p$~\cite[Lemma 6.2.1]{EP-value}.
So Corollary~\ref{cor:delon} follows from Fact~\ref{fact:delon} and Lemma~\ref{lem:completion}.

\begin{cor}
\label{cor:delon}
Suppose that $K$ is a subfield of $\Q_p$ which is algebraically closed in $\Q_p$ (e.g. the algebraic closure of $\Q$ in $\Q_p$).
Then $X \subseteq K^m$ is $K^{\mathrm{Sh}}$-definable if and only if $X = K^m \cap Y$ for some $\Q_p$-definable $Y \subseteq \Q^m_p$.
\end{cor}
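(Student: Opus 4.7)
The plan is to combine the two previously established facts with no essential additional work. First I would invoke the cited Engler--Prestel result to upgrade the hypothesis: since $K$ is algebraically closed in $\Q_p$, we have $K \prec \Q_p$. Second, observe that the proof of Delon's theorem just given, packaged as Fact~\ref{fact:delon}, tells us precisely that $\Q_p$ is Shelah complete. These two observations set up the hypotheses of Lemma~\ref{lem:completion}.

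For the direction $(\Leftarrow)$, suppose $Y \subseteq \Q_p^m$ is $\Q_p$-definable by some formula with parameters from $\Q_p$, and let $\Sa N$ be a highly saturated elementary extension of $\Q_p$. Since $K \prec \Q_p \prec \Sa N$, the set $Y(\Sa N)$ satisfies $K^m \cap Y(\Sa N) = K^m \cap Y$. Hence $K^m \cap Y$ is externally definable in $K$, and by Fact~\ref{fact:shelah-cs} it is $K^{\mathrm{Sh}}$-definable.

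For the direction $(\Rightarrow)$, apply Lemma~\ref{lem:completion} with $\Sa M = \Q_p$ and $\Sa O = K$: we have $K \prec \Q_p$, and $\Q_p$ is NIP (being dp-minimal) and Shelah complete, so every $K^{\mathrm{Sh}}$-definable subset of $K^m$ is of the form $K^m \cap X$ for some $\Q_p$-definable $X \subseteq \Q_p^m$, which is exactly what is required.

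Since both directions are direct unpacking of already-proved results, there is no real obstacle. The one subtle point worth flagging is the parameter bookkeeping: $K^{\mathrm{Sh}}$-definable sets are allowed parameters from $K$ (or equivalently, by Fact~\ref{fact:shelah-cs}, from any larger model), while $\Q_p$-definable sets are allowed parameters from $\Q_p$; these are exactly the parameter sets that the two lemmas match up.
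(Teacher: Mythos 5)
Your proof is correct and follows essentially the same route as the paper, which likewise combines the Engler--Prestel criterion ($K \prec \Q_p$), Delon's theorem (Shelah completeness of $\Q_p$), and Lemma~\ref{lem:completion}, with the ($\Leftarrow$) direction being the routine external-definability observation you spell out. The only cosmetic remark: in the ($\Leftarrow$) direction the implication ``externally definable $\Rightarrow$ $K^{\mathrm{Sh}}$-definable'' is immediate from the definition of the Shelah completion as the induced structure, rather than from Fact~\ref{fact:shelah-cs}, which gives the converse.
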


\noindent
So the Shelah completion of $K$ is the structure induced on $K$ by its valuation-theoretic completion.
There are several similar results.
If $R$ is a real closed subfield of $(\R,+,\times)$ then every $R^{\mathrm{Sh}}$-definable set is of the form $R^m \cap X$ for $\rfield)$-definable $X \subseteq \R^m$.
More generally, suppose that $R$ is a divisible subgroup of $(\R,+)$ and $\Sa R$ is an o-minimal expansion of $(R,<,+)$.
Laskowski and Steinhorn~\cite{LasStein} show that there is a unique o-minimal expansion $\Sq R$ of $(\R,<,+)$ such that $\Sa R \prec \Sq R$.
By Marker-Steinhorn~\cite{Marker-Steinhorn} $\Sq R$ is Shelah complete, so every $\Sh R$-definable set is of the form $R^m \cap X$ for $\Sq R$-definable $X \subseteq \R^m$.
Finally, if $H$ is a dense subgroup of $(\R,+)$ then every $(H,+,<)^{\mathrm{Sh}}$-definable set is a boolean combination of $(H,+)$-definable sets and sets of the form $H^n \cap X$ for $(\R,<,+)$-definable $X \subseteq \R^m$, see \cite{dp-embedd}.
(If $H$ is not $n$-divisible then $nH \neq X \cap \R$ for any $(\R,<,+)$-definable $X \subseteq \R$.)


\section{$\anfield$ is Shelah complete}
\label{section:analytic}
\noindent
Let $\anfield$ be the expansion of $\Q_p$ by all analytic functions $\Z^m_p \to \Q_p$ for all $m$.
There is a well-developed theory of $\anfield$-definable sets beginning with Denef and van den Dries~\cite{Denef1988}.
It is shown in \cite{dhm-subanalytic} that every definable unary set in every elementary extension of $\anfield$ is definable in the underlying field.
Fact~\ref{fact:an-dp} easily follows.

\begin{fact}
\label{fact:an-dp}
$\anfield$ is dp-minimal.
\end{fact}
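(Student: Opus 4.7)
The plan is to transfer dp-minimality from the pure field $\pfield$ (cited as dp-minimal after Fact~\ref{fact:sw}) to $\anfield$, using the result from \cite{dhm-subanalytic} quoted just above. The underlying observation is that $\dim_{\Sa M} M$, as defined in Section~\ref{section:dp-rank}, is witnessed solely by formulas whose \emph{searched} variable $y$ is a single variable; hence it is controlled entirely by the family of unary subsets of $M$ that such formulas uniformly parameterize.

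I would argue by contradiction: assume $\dim_{\anfield} \Q_p \geq 2$. The finitary-array definition then yields a pair $\Phi = (\varphi_0(x_0; y), \varphi_1(x_1; y))$ of $\anfield$-formulas with $|y| = 1$ admitting arbitrarily long depth-$2$ arrays. A standard compactness argument in a sufficiently saturated elementary extension $\Sa N \succ \anfield$ produces an infinite ict-array $(a_{\alpha, i})_{\alpha \in \{0,1\},\ i < \omega}$ in $\Sa N$ realizing $\Phi$.

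By the cited result from \cite{dhm-subanalytic}, for each $(\alpha, i)$ the unary set $\{b \in N : \Sa N \models \varphi_\alpha(a_{\alpha, i}; b)\}$ is definable in the field reduct $(N, +, \times)$. A further compactness argument upgrades this pointwise statement to uniform definability: for each $\alpha \in \{0,1\}$ there are finitely many field formulas $\psi^{(\alpha)}_1(y; z_1), \ldots, \psi^{(\alpha)}_{k_\alpha}(y; z_{k_\alpha})$ such that every instance of $\varphi_\alpha$ coincides with an instance of some $\psi^{(\alpha)}_j$. Concretely, the partial type in the variable $x$ asserting that no field formula has a parameter realizing the set $\varphi_\alpha(x; \cdot)$ is unrealized in $\Sa N$ by the pointwise fact, hence inconsistent with the theory of $\anfield$ by saturation, so a finite sub-collection already suffices.

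Pigeonhole on each infinite row of the array then yields infinite subsequences whose instances of $\varphi_\alpha$ are all captured by a single field formula $\psi^{(\alpha)}$, and the surviving array is a depth-$2$ ict-pattern in the field reduct of $\Sa N$, contradicting dp-minimality of $\pfield$ (preserved under elementary extension). The main obstacle is the pointwise-to-uniform compactness step, which is where the result of \cite{dhm-subanalytic} does the real work; extracting an infinite array, pigeonholing rows, and the final transfer to $\pfield$ are routine.
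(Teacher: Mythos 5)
Your proof is correct and is exactly the argument the paper intends by ``easily follows'': dp-minimality is witnessed by formulas with a single object variable $y$, the result of \cite{dhm-subanalytic} reduces the relevant unary families to the field reduct (with your compactness step upgrading pointwise to uniform definability being the standard way to make this precise), and dp-minimality of $\mathrm{Th}(\Q_p)$ finishes. No gaps; the pigeonhole extraction and the observation that the same witnesses $b$ serve the field-formula sub-array are exactly right.
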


\noindent
So in particular dp-rank agrees with the canonical dimension on $\anfield$-definable sets.
\newline

\noindent
Suppose that $\anfield \prec \Sa L$ is highly saturated and let $L$ be the underlying field of $\Sa L$.
Note that $\anfield \prec \Sa L$ is tame.
Let $\valp$ be the $p$-adic valuation, $V$ be the set of $a \in L$ such that $\valp(a) \geq k$ for some $k$, and $\st : V^m \to \Q^m_p$ be the standard part map.
As above $V$ is a valuation subring of $L$, the associated valuation is a coarsening of $\valp$, and $\st : V \to \Q_p$ is the residue map.
Fact~\ref{fact:sr} is the analytic analogue of Fact~\ref{fact:henselian}.
Fact~\ref{fact:sr} follows easily from a theorem of Rideau~\cite[Theorem 3.10]{rideau-analytic-difference}.
(This is closely related to the work of Cluckers, Lipshitz, and Robinson on the model theory of valued fields with analytic structure \cite{cluckers-lipshitz-analytic,MR3584560,MR2290137}.)

\begin{fact}
\label{fact:sr}
A subset $X$ of $\Q^m_p$ is $(\Sa L,V)$-definable if and only if it is $\anfield$-definable.
\end{fact}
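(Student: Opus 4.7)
The backward implication is immediate from $\anfield \prec \Sa L$: if $\varphi(\bar x;\bar c)$ is an $\anfield$-formula defining $X \subseteq \Q_p^m$ with parameters $\bar c \in \Q_p^k$, then the same formula interpreted in $\Sa L$ picks out the same tuples when $\bar x$ ranges over $\Q_p^m$, and $\varphi$ is a fortiori an $(\Sa L,V)$-formula.

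For the forward direction my plan is to apply an Ax--Kochen style relative quantifier elimination for Henselian valued fields with analytic structure. I would recast $(\Sa L, V)$ as a Henselian valued field of equicharacteristic zero with analytic structure, where the valuation is the coarsening $w$ of $\valp$ whose valuation ring is $V$; its residue field is $\Q_p$ and its value group is $\Gamma_L/O$. The analytic functions appearing in $\Sa L$'s language, when restricted to $V^m$ and reduced through $\st$, induce analytic structure on the residue field. Rideau's theorem \cite[Theorem 3.10]{rideau-analytic-difference} -- the analytic analogue of the Pas-style quantifier elimination that underlies Fact~\ref{fact:henselian} -- then tells us that every $(\Sa L, V)$-formula is equivalent, modulo the theory, to a boolean combination of formulas whose variables range only over the value group sort and the residue field sort (each with its induced structure). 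Specialising to formulas in residue-field variables, any $(\Sa L, V)$-definable subset of $\Q_p^m$ is definable in $\Q_p$ equipped with the induced residue-field structure.

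The final step is to identify that induced analytic structure on $\Q_p$ with the language of $\anfield$. Each generator $f:\Z_p^m \to \Q_p$ of $\anfield$'s language extends in $\Sa L$ to an analytic function on $V^m$, and reduction via $\st$ returns $f$ on $\Z_p^m$; so the induced residue-field structure is interdefinable with $\anfield$, and Fact~\ref{fact:sr} follows. The main technical obstacle is verifying the hypothesis of Rideau's theorem for the coarsened pair $(L,w)$: one must check that the separated analytic structure on $(\Sa L,\valp)$ descends to a separated analytic structure on $(L,w)$ in the sense of Cluckers--Lipshitz--Robinson \cite{cluckers-lipshitz-analytic,MR3584560,MR2290137}, so that the relative quantifier elimination is available. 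Once this is granted, the remaining steps are formal.
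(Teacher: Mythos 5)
Your proposal matches the paper's treatment: the paper gives no argument for this Fact beyond citing Rideau's Theorem 3.10 (with a pointer to the Cluckers--Lipshitz--Robinson theory of analytic structures), and your reconstruction --- coarsening $\valp$ to the valuation $w$ with valuation ring $V$ and residue field $\Q_p$, invoking the relative quantifier elimination for equicharacteristic-zero Henselian fields with analytic structure, checking that the separated analytic structure descends to the coarsened pair, and identifying the induced residue-field structure with $\anfield$ by reducing the analytic generators through $\st$ --- is exactly the intended deduction. One small caution: since $\Q_p$ is not an $(\Sa L,V)$-definable subset of $L$, the Fact must be read in the residue sort $V/\mfrak \cong \Q_p$ (this is how it is used, e.g.\ to conclude that $\st(Y)$ is $\anfield$-definable), so your ``immediate'' backward direction should also be routed through your final step (the primitives of $\anfield$ commute with $\st$, hence $\anfield$ is a reduct of the induced structure) rather than through elementarity alone.
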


\noindent
Following the argument of Section~\ref{section:delon}, applying Fact~\ref{fact:an-dp} when necessary, and applying Fact~\ref{fact:sr} in place of Fact~\ref{fact:henselian} we obtain Theorem~\ref{thm:delon-an}.

\begin{thm}
\label{thm:delon-an}
If $X \subseteq L^m$ is $\Sa L$-definable then $\Q^m_p \cap X$ is $\anfield$-definable.
\end{thm}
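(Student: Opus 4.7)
My plan is to mimic the proof of Delon's theorem in Section~\ref{section:delon} essentially verbatim, with Fact~\ref{fact:sr} playing the role that Fact~\ref{fact:henselian} played there, and with Fact~\ref{fact:an-dp} providing dp-minimality whenever the earlier argument invoked it.

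First I would fix a highly saturated $\anfield \prec \Sa L$; since $\Q_p$ is locally compact, $\Sa L$ is automatically a tame extension, so the machinery of Section~\ref{section:results} applies. Let $w$ be the coarsening of the $p$-adic valuation on $L$ whose residue map is $\st : V \to \Q_p$. As noted in the proof of Proposition~\ref{prop:induced-0}, $w$ (and hence $\st$) is $\Sh L$-definable, and $w$ is Henselian as a coarsening of a Henselian valuation. Crucially, $(\Sa L, V)$ now has the analytic structure required by Fact~\ref{fact:sr}, which tells us that the subsets of $\Q^m_p$ definable in $(\Sa L, V)$ are exactly the $\anfield$-definable ones. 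This is the analytic replacement of the appeal to Pas elimination, and identifying $\st(\Sa L)$ with $\anfield$ up to interdefinability comes out of this step directly.

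The second stage is to upgrade this identification to $\st(\Sh L) = (\anfield)^{\mathrm{Sh}}$. Here I would follow the proof of Proposition~\ref{prop:induced-1}, which required the base to be dp-minimal; Fact~\ref{fact:an-dp} supplies exactly this hypothesis for $\anfield$, and the companion results used there (Facts~\ref{fact:shelah-1} and \ref{fact:constructible}) are stated in the necessary generality, so the argument transfers with no change: closed $(\anfield)^{\mathrm{Sh}}$-definable sets are recovered as $\st(V^n \cap Y)$ for honest definitions $Y \subseteq L^n$, and conversely every $\st(\Sh L)$-definable set is $(\anfield)^{\mathrm{Sh}}$-definable by Proposition~\ref{prop:induced-00}.

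Finally, to deduce the theorem, I would take an arbitrary $\Sa L$-definable $X \subseteq L^m$. Then $\Q^m_p \cap X$ is externally definable in $\anfield$, hence $(\anfield)^{\mathrm{Sh}}$-definable by Fact~\ref{fact:shelah-cs}. By the interdefinability established above it is $\st(\Sa L)$-definable, and by the first stage this means it is $\anfield$-definable, giving the conclusion. The only real obstacle is the correct bookkeeping in the second stage: one must verify that the closure/honest-definition argument of Proposition~\ref{prop:induced-1}, which was written for a valued field base, goes through when $\Sa K$ is replaced by $\anfield$, but since that proof only used dp-minimality, Fact~\ref{fact:constructible}, and the existence of honest definitions, no new ideas are needed.
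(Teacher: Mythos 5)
Your proposal is correct and follows the paper's own route exactly: the paper proves Theorem~\ref{thm:delon-an} precisely by rerunning the Section~\ref{section:delon} argument with Fact~\ref{fact:sr} in place of Fact~\ref{fact:henselian} and with Fact~\ref{fact:an-dp} supplying the dp-minimality needed for Facts~\ref{fact:shelah-1} and \ref{fact:constructible}, which is your plan verbatim. One small bookkeeping slip in your final paragraph: stage two identifies $(\anfield)^{\mathrm{Sh}}$ with $\st(\Sh L)$, not with $\st(\Sa L)$, and since an honest definition $Z$ of a closed set lives in a larger saturated $\Sa L \prec \Sa N$ (so $Y = L^n \cap Z$ is only $\Sh L$-definable), you should apply Fact~\ref{fact:sr} over $\Sa N$ directly, where the same density computation gives $X = \st(V^n \cap Z)$ --- but this is exactly the ``slight modification'' the paper itself waves at, so no new idea is missing.
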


\noindent
So $\anfield$ is Shelah complete.
(This was proven in unpublished work of Hrushovski, see \cite[Fact 2.6]{Onshuus2008}.)
Proposition~\ref{prop:an-dim-eq} follows by Proposition~\ref{prop:induced-0} and Fact~\ref{fact:sr}.

\begin{prop}
\label{prop:an-dim-eq}
If $X$ is an $\Sa L$-definable subset of $L^m$ then $\dim_{\anfield} \st(X) \leq \dim_{\Sa L} X$.
\end{prop}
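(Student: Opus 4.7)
The plan is to reduce the proposition to Proposition~\ref{prop:induced-0} by identifying $\anfield$ as a reduct of $\st(\Sa L)$ (interpreting $\st(X)$ as $\st(V^m \cap X)$, since $\st$ is only defined on $V^m$). The two main ingredients are Proposition~\ref{prop:induced-0}, which bounds $\dim_{\st(\Sa L)} \st(V^m \cap X)$ above by $\dim_{\Sa L} X$, and the identification of $\st(\Sa L)$-definable subsets of $\Q^m_p$ with $\anfield$-definable ones, which ultimately rests on Fact~\ref{fact:sr}.

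First I verify the hypotheses of Section~\ref{section:results}: $\anfield$ is $\nip$ (in fact dp-minimal by Fact~\ref{fact:an-dp}), and $\anfield \prec \Sa L$ is tame because $\Q_p$ is locally compact, as already noted in the paragraph preceding Fact~\ref{fact:sr}. Proposition~\ref{prop:induced-0} then immediately yields
\[
\dim_{\st(\Sa L)} \st(V^m \cap X) \leq \dim_{\Sa L} X.
\]

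Next I upgrade the left-hand side from $\dim_{\st(\Sa L)}$ to $\dim_{\anfield}$. By dp-minimality of $\anfield$, Proposition~\ref{prop:induced-1} applies with $\Sa K = \anfield$ and shows that $\anfield$ is a reduct of $\st(\Sa L)$; in the analytic setting Fact~\ref{fact:sr} is the input that makes the requisite interdefinabilities go through. Under this reduct relation, any $(\anfield, Y, \lambda)$-array is automatically an $(\st(\Sa L), Y, \lambda)$-array, since the rows are indexed by formulas of $\anfield$ which remain formulas of the richer structure and witness the same combinatorial pattern. Hence $\dim_{\anfield} Y \leq \dim_{\st(\Sa L)} Y$ for every $Y \subseteq \Q^m_p$ definable in both structures; applying this to $Y = \st(V^m \cap X)$, which is $\anfield$-definable by Fact~\ref{fact:sr}, and chaining with the previous inequality gives $\dim_{\anfield} \st(X) \leq \dim_{\Sa L} X$.

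The main point requiring care is the direction of monotonicity of dp-rank under reducts: passing from a reduct to an expansion can only weakly \emph{increase} dp-rank, because the stock of available formulas grows. Once this is settled the argument is a routine concatenation of Propositions~\ref{prop:induced-0} and~\ref{prop:induced-1} with the analytic inputs Facts~\ref{fact:an-dp} and~\ref{fact:sr}; no further combinatorial work is needed.
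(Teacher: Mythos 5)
Your proof is correct and takes essentially the same route as the paper, which derives Proposition~\ref{prop:an-dim-eq} directly from Proposition~\ref{prop:induced-0} and Fact~\ref{fact:sr}. Your additional step --- invoking Proposition~\ref{prop:induced-1} (via Fact~\ref{fact:an-dp}) to see that $\anfield$ is a reduct of $\st(\Sa L)$, so that dp-rank can only weakly increase when passing from $\anfield$ to $\st(\Sa L)$ --- merely makes explicit the comparison of $\dim_{\anfield}$ with $\dim_{\st(\Sa L)}$ that the paper's one-line citation leaves implicit, and you correctly identify that this is the direction of monotonicity the argument needs.
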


\noindent
Proposition~\ref{prop:induced-1} and Theorem~\ref{thm:delon-an} together yield a strengthening of Fact~\ref{fact:sr}.

\begin{cor}
\label{cor:sr}
The structure induced on $\Q_p$ by $\Sh L$ is interdefinable with $\anfield$.
\end{cor}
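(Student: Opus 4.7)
The plan is to obtain Corollary~\ref{cor:sr} as an almost immediate consequence of two results already in place: the transitivity of Shelah completions recorded in the second clause of Fact~\ref{fact:external-basic}, and the Shelah completeness of $\anfield$ established as Theorem~\ref{thm:delon-an}.

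First I would apply the second clause of Fact~\ref{fact:external-basic} to the chain $\anfield \prec \Sa L$. The hypothesis of that clause requires that the smaller structure be $\nip$, which holds here because $\anfield$ is dp-minimal by Fact~\ref{fact:an-dp}. Since $\Sa L$ is the highly saturated elementary extension fixed throughout Section~\ref{section:analytic}, the fact yields that the structure induced on $\Q_p$ by $\Sh L$ is interdefinable with $\Sh{(\anfield)}$.

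Next I would invoke Theorem~\ref{thm:delon-an}: every $\Sa L$-definable subset of $\Q^m_p$ is $\anfield$-definable. Because $\Sa L$ is highly saturated and the collection of externally definable sets does not depend on the choice of such an elementary extension, this is exactly the statement that $\anfield$ is Shelah complete, so $\Sh{(\anfield)}$ and $\anfield$ are interdefinable. Chaining the two interdefinabilities then completes the proof.

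There is no real obstacle: both inputs have already been assembled, and the argument is just a matter of unwinding definitions. The one point worth checking is that the ``structure induced on $\Q_p$ by $\Sh L$'' in the statement is literally the object named in Fact~\ref{fact:external-basic}, but this is immediate from the definition of induced structure recalled in Section~2. Note that the alternative route through Proposition~\ref{prop:induced-1} would instead identify $\st(\Sh L)$ with $\anfield$; since $\st$ is the identity on $\Q^m_p$ and $\mfrak^m$ is $\Sh L$-definable, one sees $\st(\Sh L)$ is a reduct of the induced structure, but going directly through Fact~\ref{fact:external-basic} avoids having to argue the other inclusion by hand.
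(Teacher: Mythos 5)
Your proof is correct, and the gap between it and the paper is small but real: the paper's entire proof is the one sentence ``Proposition~\ref{prop:induced-1} and Theorem~\ref{thm:delon-an} together yield a strengthening of Fact~\ref{fact:sr}'', i.e.\ it identifies $(\anfield)^{\mathrm{Sh}}$ with $\st(\Sh L)$ via the tame-extension machinery of Proposition~\ref{prop:induced-1} and then collapses $(\anfield)^{\mathrm{Sh}}$ to $\anfield$ by Shelah completeness, whereas you identify the induced structure on $\Q_p$ with $(\anfield)^{\mathrm{Sh}}$ via the second clause of Fact~\ref{fact:external-basic} and then apply the same collapse. For the statement as written your route is actually the tighter one: the corollary speaks of the structure induced on $\Q_p$ by $\Sh L$, which is literally the object appearing in Fact~\ref{fact:external-basic}, while Proposition~\ref{prop:induced-1} directly addresses only $\st(\Sh L)$, so the paper's citation still implicitly needs a bridge from $\st(\Sh L)$ to the induced structure --- essentially the first clause of Fact~\ref{fact:external-basic}, or the observation you make at the end that $\st(V^n \cap X) = (X + \mfrak^n) \cap \Q^n_p$ with $X + \mfrak^n$ being $\Sh L$-definable, together with the converse direction. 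Your verification of the hypotheses is also exactly right: $\anfield$ is $\nip$ by Fact~\ref{fact:an-dp}, and reading Theorem~\ref{thm:delon-an} as Shelah completeness of $\anfield$ uses precisely the remark that external definability is independent of the choice of highly saturated extension (one could add that the second clause of Fact~\ref{fact:external-basic} does not even require $\Sa L$ to be saturated, so your appeal to saturation there is harmless but unnecessary). What your route buys is a more elementary and more general argument: it shows that for \emph{any} Shelah complete $\nip$ structure, the structure induced on it by the Shelah completion of any elementary extension is interdefinable with the structure itself, with no valued-field, tameness, or standard-part input; what the paper's route buys is the additional identification of $\st(\Sh L)$ itself, which is not needed for this corollary.
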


\section{A geometric application}
\label{section:geometric}
\noindent
Following work of Br\"{o}cker~\cite{MR1226248,MR1260702} and van den Dries~\cite{vdd-limit} in the semialgebraic and o-minimal settings, respectively, we give a geometric application of the results above.
We let $|,|$ be the usual absolute value on $\Q_p$ and declare
$$ \|a\| := \max \{|a_1|,\ldots,|a_m| \}  \quad \text{for all}  \quad a = (a_1,\ldots,a_m) \in \Q^m_p. $$
The Hausdorff distance $d_H(X,X')$ between bounded subsets $X,X'$ of $\Q^m_p$ is the infimum of $t \in \R_{>0}$ such that for every $a \in X$ there is $a' \in X'$ such that $\| a - a' \| < t$ and for every $a' \in X'$ there is $a \in X$ such that $\| a - a' \| < t$.
The Hausdorff distance between a bounded set and its closure is always zero.
If $\Cal X$ is a family of bounded subsets of $\Q^m_p$ then $X \subseteq \Q^m_p$ is a \textbf{Hausdorff limit} of $\Cal X$ if $X$ is compact and there is a sequence $(X_k)_{k \in \N}$ of elements of $\Cal X$ such that $d_H(X_k,X) \to 0$ as $k \to \infty$.

\begin{thm}
\label{thm:hausdorff-limit}
Suppose that $\Cal X$ is an $\anfield$-definable family of bounded subsets of $\Q^m_p$.
Any Hausdorff limit of $\Cal X$ is $\anfield$-definable.
If $(X_k)_{k \in \N}$ is a sequence of elements of $\Cal X$ which Hausdorff converges to $X \subseteq \Q^m_p$ then $\dim X \leq \limsup_{k \to \infty} \dim X_k$.
\end{thm}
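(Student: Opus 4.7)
The plan is to realize the Hausdorff limit $X$ as $\st(V^m \cap X_\tau)$ for a suitable $\tau$ in a highly saturated extension $\anfield \prec \Sa L$, and then invoke the tools of Section~\ref{section:analytic}. I would first reduce to the case where $\Cal X$ is uniformly bounded inside a fixed polydisc $p^{-N}\Z^m_p$: compactness of $X$ together with $d_H(X_{t_k},X) \to 0$ forces $X_{t_k} \subseteq p^{-N}\Z^m_p$ for all sufficiently large $k$, so we may replace $\Cal X$ by the $\anfield$-definable subfamily consisting of those $X_t$ contained in $p^{-N}\Z^m_p$. Choose a highly saturated $\Sa L \succ \anfield$ and let $V,\mathfrak{m},\st$ be as in Section~\ref{section:analytic}. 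Using compactness of $X$, fix for each $n \in \N$ a finite subset $A_n = \{a^n_1,\dots,a^n_{N_n}\} \subseteq X$ with $X \subseteq \bigcup_{i \leq N_n} B(a^n_i, p^{-n})$, where $B(a,r)$ denotes the closed ball of radius $r$.

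The next step is the saturation construction of $\tau$. By Fact~\ref{fact:sw}, dp-rank coincides with naive dimension on both $\anfield$- and $\Sa L$-definable sets, so the condition $\dim X_t \leq d$ is expressible by a single first-order formula in $t$ that transfers between $\anfield$ and $\Sa L$. Setting $d := \limsup_k \dim X_{t_k}$ and discarding finitely many initial terms, we may assume $\dim X_{t_k} \leq d$ for every $k$. Hausdorff convergence then gives, for each $n$ and all sufficiently large $k$, both $X_{t_k} \subseteq \bigcup_{i \leq N_n} B(a^n_i, p^{-n})$ and $X_{t_k} \cap B(a^n_i, p^{-n}) \neq \emptyset$ for every $i \leq N_n$. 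The conjunction of these assertions over all $n$, together with $\dim X_\tau \leq d$, forms a countable partial type in $\tau$ over the parameters $\bigcup_n A_n \subseteq \Q^m_p$ that is finitely satisfiable in the sequence $(t_k)$, so by $\aleph_1$-saturation it is realized by some $\tau \in T(\Sa L)$.

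Finally I would verify that $X = \st(V^m \cap X_\tau)$. For the inclusion $\supseteq$, given $b \in V^m \cap X_\tau$ with $c := \st(b) \in \Q^m_p$, the covering condition gives some $i$ with $\|b - a^n_i\| \leq p^{-n}$, and since $b - c \in \mathfrak{m}^m$ the ultrametric inequality forces $\|c - a^n_i\| \leq p^{-n}$ in $\Q^m_p$; closedness of $X$ then gives $c \in X$. For $\subseteq$, given $a \in X$ and $n$, pick $a^n_i$ with $a \in B(a^n_i, p^{-n})$; the ultrametric identity $B(a^n_i,p^{-n}) = B(a,p^{-n})$ combined with the nonempty-intersection condition yields $X_\tau \cap B(a, p^{-n}) \neq \emptyset$ for every $n$, and one further application of saturation produces a single $b \in X_\tau$ with $\|b - a\| \leq p^{-n}$ for all $n$, so $\st(b) = a$. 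Hence $X = \st(V^m \cap X_\tau)$ is definable in the Shelah completion of $\anfield$ by Proposition~\ref{prop:induced-00}, and therefore $\anfield$-definable by Theorem~\ref{thm:delon-an}; Proposition~\ref{prop:an-dim-eq} then yields $\dim X \leq \dim_{\Sa L} X_\tau \leq d = \limsup_k \dim X_{t_k}$. The step I expect to demand the most care is the saturation construction of $\tau$, particularly the invocation of Fact~\ref{fact:sw} that ensures the dimension bound is captured by a single formula making sense in both $\anfield$ and $\Sa L$.
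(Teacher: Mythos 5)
Your proof is correct and takes essentially the same route as the paper: both realize the limit $X$ as $\st$ of the fiber over a nonstandard parameter in a highly saturated $\Sa L \succ \anfield$, then get definability via Fact~\ref{fact:sr} (your variant via Proposition~\ref{prop:induced-00} and Theorem~\ref{thm:delon-an} is equivalent) and the dimension bound via Fact~\ref{fact:def-dim} together with Proposition~\ref{prop:an-dim-eq}. The only difference is cosmetic: the paper produces the parameter as an ultrafilter limit of the types $\tp(b_k|\Q_p)$ --- a device it explicitly calls nonessential --- whereas you realize a hand-built countable partial type by saturation, spelling out the covering and intersection conditions behind the paper's ``it is easy to see that $Y \subseteq V^m$ and $X = \st(Y)$.''
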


\noindent
Note that \textit{any} compact subset of $\Z^m_p$ is a Hausdorff limit of a sequence of finite sets so the restriction to definable families of sets is necessary.
We will need to use Fact~\ref{fact:def-dim}, an immediate consequence of the equality of naive dimension and the canonical dimension on $\anfield$-definable sets.

\begin{fact}
\label{fact:def-dim}
Suppose that $(X_a :a \in \Q^n_p )$ is an $\anfield$-definable family of subsets of $\Q^m_p$.
Then $\{ a \in \Q^n_p : \dim X_a = l \}$ is $\anfield$-definable for any $0 \leq l \leq m$.
\end{fact}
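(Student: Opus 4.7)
The plan is to reduce the definability of dp-rank level sets to definability of a topological condition by invoking Fact~\ref{fact:sw}. Since $\anfield$ is dp-minimal by Fact~\ref{fact:an-dp}, Fact~\ref{fact:sw} tells us that for any nonempty $\anfield$-definable $Y \subseteq \Q^m_p$, the dp-rank of $Y$ equals its naive dimension: the largest $k \leq m$ such that some coordinate projection $\pi_I : \Q^m_p \to \Q^{|I|}_p$ (with $I \subseteq \{1,\ldots,m\}$, $|I|=k$) has $\pi_I(Y)$ with nonempty interior in $\Q^k_p$.

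First I would note that for each fixed $I \subseteq \{1,\ldots,m\}$, the family $\bigl(\pi_I(X_a) : a \in \Q^n_p\bigr)$ is an $\anfield$-definable family of subsets of $\Q^{|I|}_p$, since coordinate projections are quantifier-free definable. Next I would observe that having nonempty interior in $\Q^k_p$ is first-order expressible: a set $Z \subseteq \Q^k_p$ has nonempty interior if and only if there exist $b \in Z$ and $c \in \Q_p^\times$ such that $\{y \in \Q^k_p : \|y-b\| \leq |c|\} \subseteq Z$. Applying this formula uniformly with $Z = \pi_I(X_a)$ gives an $\anfield$-definable condition on $a$.

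Combining these observations, for each $k$ the set
\[
\{a \in \Q^n_p : \dim X_a \geq k\} \;=\; \bigcup_{\substack{I \subseteq \{1,\ldots,m\}\\ |I|=k}} \bigl\{a \in \Q^n_p : \pi_I(X_a) \text{ has nonempty interior in } \Q^k_p\bigr\}
\]
is a finite union of $\anfield$-definable sets, hence $\anfield$-definable. The set $\{a : X_a = \emptyset\}$ is obviously $\anfield$-definable. Subtracting yields
\[
\{a \in \Q^n_p : \dim X_a = l\} \;=\; \{a : \dim X_a \geq l\} \setminus \bigl(\{a : \dim X_a \geq l+1\} \cup \{a : X_a = \emptyset\}\bigr)
\]
for $l \geq 1$, and a similar expression for $l = 0$ using Fact~\ref{fact:dp-rank-basic-1}(1) (finite sets have dp-rank $0$).

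There is no serious obstacle: all the content is packaged into Fact~\ref{fact:sw}, which converts the abstract model-theoretic notion of dp-rank into the concrete topological statement about projections having nonempty interior, and the latter is manifestly definable uniformly in parameters. The finiteness of the family of coordinate projections is what keeps the union finite and avoids any issue with quantifying over projections.
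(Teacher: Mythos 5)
Your proof is correct and is essentially the paper's own argument: the paper dismisses this fact as ``an immediate consequence of the equality of naive dimension and the canonical dimension on $\anfield$-definable sets'' (i.e.\ Fact~\ref{fact:sw} applied via dp-minimality, Fact~\ref{fact:an-dp}), and your write-up simply makes explicit the uniform first-order definability of ``some $k$-fold coordinate projection has nonempty interior'' together with the finite union over projections. Nothing to correct.
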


\noindent
We now proceed to prove Theorem~\ref{thm:hausdorff-limit}.
Our proof is very similar to that in \cite{vdd-limit} so we omit some details.
We also make a nonessential use of ultrafilter convergence.

\begin{proof}
Let $\anfield \prec \Sa L$ be highly saturated.
Let $|x| = m$ and $\phi(x;y)$ be a formula such that $\Cal X$ is  $(\phi(\Q^m_p;b) : b \in \Q^{|y|}_p)$.
For each $k$ fix $b_k \in \Q^{|y|}_p$ such that $X_k = \phi(\Q^m_p;b_k)$.
Let $\mathfrak{u}$ be a nonprinciple ultrafilter on $\N$.
Applying saturation fix $b \in K^{|y|}$ such that $\tp(b_k|\Q_p) \to \tp(b|\Q_p)$ as $k \to \mathfrak{u}$.
Let $Y := \phi(L^m;b)$.
It is easy to see that $Y \subseteq V^m$ and $X = \st(Y)$.
So $X$ is $\anfield$-definable by Fact~\ref{fact:sr}.
By Fact~\ref{fact:def-dim} we have
$$\dim_{\Sa L} Y = \lim_{k \to \mathfrak{u}} \dim X_k \leq \limsup_{k \to \infty} \dim X_k.$$
So by Proposition~\ref{prop:an-dim-eq} we have $\dim X \leq \limsup_{k \to \infty} \dim X_k$.
\end{proof}

\noindent
Our proof of Theorem~\ref{thm:hausdorff-limit} goes through over $\Q_p$.
We give an attractive formulation of the first claim in this setting.
For each $k \geq 0$ we let $P_k$ be a unary relation defining the set of $k$th powers in $\Q_p$.
It is a famous theorem of Macintyre~\cite{macintyre-p-adic} that every paremeter free formula in the language of rings is equivalent over $\Q_p$ to a boolean combination of formulas of the form $f = g$, $\valp(f) \leq \valp(g)$, or $P_k(f)$ for $f,g \in \Z[x_1,\ldots,x_m]$.
Suppose $X \subseteq \Q^m_p$ is $\Q_p$-definable.
The \textbf{complexity} of $X$ is $\leq n$ if $X$ may be defined using $\leq n$ formulas of the form $f = g, \valp(f) \leq \valp(g)$, or $P_k(f)$ where each $k \leq n$ and each $f,g \in \Q_p[x_1,\ldots,x_m]$ has degree $\leq n$.
It is easy to see that Theorem~\ref{thm:qp-complexity}  follows from saturation and the fact that a Hausdorff limit of a sequence of elements of a $\Q_p$-definable family of sets is $\Q_p$-definable.

\begin{thm}
\label{thm:qp-complexity}
For every $n,m$ there is an $l$ such that if $(X_k)_{k \in \N}$ is a Hausdorff converging sequence of bounded $\Q_p$-definable subsets of $\Q^m_p$, each of complexity $\leq n$, then the Hausdorff limit $X$ of $(X_k)_{k \in \N}$ is $\Q_p$-definable of complexity $\leq l$.
\end{thm}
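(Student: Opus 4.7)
The plan is a compactness argument based on the $\Q_p$-analogue of Theorem~\ref{thm:hausdorff-limit} (whose proof carries over verbatim) together with Delon's theorem. Fix $n, m$. By Macintyre's quantifier elimination, the bounded complexity-$\leq n$ subsets of $\Q^m_p$ form finitely many $\Q_p$-definable families (one per shape of a permitted boolean combination), which via a switch coordinate combine into a single $\Q_p$-definable family $\{\psi_n(\Q^m_p;b) : b \in P_n\}$ with $P_n \subseteq \Q^{N_n}_p$ cut out by the boundedness predicate. Analogously fix $(\psi_l, P_l)$ for every $l \in \N$. The key point is that every set in $\{\psi_l(\Q^m_p;c) : c \in P_l\}$ has complexity $\leq l$ since the shape of $\psi_l$ is fixed, so it suffices to find $l$ such that every Hausdorff limit of bounded complexity-$\leq n$ subsets is of this form.

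Suppose for contradiction that no such $l$ exists. For each $l$ pick a Hausdorff convergent sequence with limit $X^l$ of complexity $> l$. Fix $\Sa L \succ \Q_p$ highly saturated; the ultrafilter construction in the proof of Theorem~\ref{thm:hausdorff-limit} produces parameters $b^l \in P_n(L)$ with $\psi_n(L^m;b^l) \subseteq V^m$ and $\st(\psi_n(L^m;b^l)) = X^l$. For any $y \in P_n(L)$ with $\psi_n(L^m;y) \subseteq V^m$, the set $X_y := \st(\psi_n(L^m;y))$ is definable in $\Sh L$ with parameter $y$ on the imaginary sort $\Q^m_p = V^m/\mfrak^m$ identified in the proof of Proposition~\ref{prop:induced-0}. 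Hence
\begin{equation*}
\chi_l(y)\ :=\ \forall c \in P_l\ \exists x \in \Q^m_p\ \neg\bigl(x \in X_y \Leftrightarrow x \in \psi_l(\Q^m_p;c)\bigr)
\end{equation*}
is a first-order $\Sh L$-formula in $y$, and the countable type $\pi(y) := \{y \in P_n,\ \psi_n(L^m;y) \subseteq V^m\} \cup \{\chi_l(y) : l \in \N\}$ is finitely consistent: any finite fragment uses only $l \leq l^*$ and is realized by $b^{l^*+1}$, since $X^{l^*+1}$ has complexity $> l^* \geq l$ and so cannot equal any $\psi_l(\Q^m_p;c)$.

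By Fact~\ref{fact:shelah-1}, $\Sh L$ is $\nip$, and saturation inherited from $\Sa L$ via Fact~\ref{fact:shelah-cs} realizes $\pi$ by some $b \in P_n(L)$. Then $X_b$ is $\Sh L$-definable on the imaginary $\Q^m_p$. The argument of Section~\ref{section:delon}, applied with $\Q_p$ in place of $\anfield$ (using Fact~\ref{fact:henselian} directly as in the proof of Delon's theorem there), yields the $\Q_p$-analogue of Corollary~\ref{cor:sr}: the structure induced on $\Q_p$ by $\Sh L$ is interdefinable with $\Q_p$. Hence $X_b$ is $\Q_p$-definable, so by Macintyre has some finite complexity $l_0$, giving $X_b = \psi_{l_0}(\Q^m_p;c_0)$ for some $c_0 \in P_{l_0}$ and contradicting $\chi_{l_0}(b)$. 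The main point requiring care is verifying that $y \mapsto X_y$ indeed defines an $\Sh L$-definable family on the imaginary $\Q^m_p$, so that $\chi_l$ is first-order; this is routine once one fixes the identification $V^m/\mfrak^m = \Q^m_p$ from Section~\ref{section:results}.
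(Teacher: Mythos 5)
Your overall skeleton --- obtain every Hausdorff limit as $\st(\psi_n(L^m;b))$ via the ultrafilter construction of Theorem~\ref{thm:hausdorff-limit}, then use a compactness argument to bound the complexity uniformly --- is the right one, and it matches what the paper's one-line proof intends. But there is a genuine gap at the pivotal step: ``saturation inherited from $\Sa L$ via Fact~\ref{fact:shelah-cs} realizes $\pi$ by some $b \in P_n(L)$.'' Fact~\ref{fact:shelah-cs} gives honest definitions, not saturation of $\Sh L$, and $\Sh L$ need not realize countable finitely satisfiable types no matter how saturated $\Sa L$ is. In your situation this failure is structural: the imaginary sort $V/\mfrak$ of $\Sh L$ on which your formulas $\chi_l$ quantify is literally the countable set $\Q_p$, so $\Sh L^{\mathrm{eq}}$ already omits the finitely satisfiable type $\{z \neq a : a \in \Q_p\}$ on that sort. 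Worse, your specific type $\pi$ is \emph{provably omitted} in $\Sh L$: by Fact~\ref{fact:henselian} (exactly as in Section~\ref{section:delon}), any $b \in P_n(L)$ with $\psi_n(L^m;b) \subseteq V^m$ has $X_b$ $\Q_p$-definable, hence of some finite complexity $l_0$, hence failing $\chi_{l_0}(b)$. That is precisely the contradiction you derive in your last step --- which means your closing argument is a correct proof that no realization of $\pi$ exists in $\Sh L$, independently of the assumption that no bound $l$ exists. The proof refutes its own realization step rather than the negation of the theorem, so as structured it is self-defeating.

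The repair is to run compactness over the \emph{theory} rather than inside the fixed model whose residue field is $\Q_p$. All your conditions ($y \in P_n$, $\psi_n(L^m;y) \subseteq V^m$, and each $\chi_l$, with quantifiers over the residue sort) are expressible in the pair $(\Sa L,V)$, and the key point is that Fact~\ref{fact:henselian} together with Macintyre's quantifier elimination holds in \emph{every} model of $\mathrm{Th}(\Sa L,V)$: any such model is again Henselian of equicharacteristic zero with residue field a model of $\mathrm{Th}(\Q_p)$, and Macintyre's theorem guarantees that your fixed families $\psi_l$ exhaust all residue-field-definable sets there, so $\pi$ is unrealized in every model, i.e.\ inconsistent with the theory. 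Ordinary first-order compactness --- no saturation of any expansion needed --- then yields finitely many $l_1,\dots,l_r$ such that $\chi_{l_1} \wedge \dots \wedge \chi_{l_r}$ is already inconsistent with ``$y \in P_n$ and $\psi_n(L^m;y) \subseteq V^m$''. Unwinding, every standard part $\st(\psi_n(L^m;b))$ falls into one of finitely many $\Q_p$-definable families, hence has complexity $\leq l := \max_i l_i$ after rewriting each family in Macintyre normal form with the parameters absorbed into coefficients; combined with your correct first step that every Hausdorff limit of $\Cal X$ is such a standard part, this proves the theorem. This is evidently what the paper means by ``saturation and the fact that a Hausdorff limit of a sequence of elements of a $\Q_p$-definable family is $\Q_p$-definable'': saturation produces the nonstandard parameter $b$, while the uniform bound comes from compactness over the pair theory (equivalently, uniform stable embeddedness of the residue field), not from any saturation of $\Sh L$.
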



\noindent
Proposition~\ref{prop:other-direction} shows that Theorem~\ref{thm:hausdorff-limit} is equivalent to the fact that $\st(X)$ is $\anfield$-definable when $X \subseteq V^m$ and Proposition~\ref{prop:an-dim-eq}.
 
\begin{prop}
\label{prop:other-direction}
Let $\anfield \prec \Sa L$ be highly saturated.
Suppose that $\phi(x;y)$ is a formula in the language of $\anfield$.
Fix $b \in L^{|y|}$ and suppose  $X := \phi(L^{|x|};b) \subseteq V^{|x|}$.
Then $\st(X)$ is a Hausdorff limit of $\Cal X := (\phi(\Q^{|x|}_p;a) : a \in \Q^{|y|}_p)$.
\end{prop}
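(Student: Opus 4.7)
The plan is to construct a sequence $(a_k)_{k \in \N} \subseteq \Q^{|y|}_p$ such that $\phi(\Q^{|x|}_p; a_k) \to \st(X)$ in Hausdorff distance. The strategy is to express ``$\phi(\cdot; a)$ Hausdorff-approximates $\st(X)$ to within $p^{-k}$'' as a first-order $\anfield$-formula in $a$, verify that the parameter $b$ witnesses this condition in $\Sa L$, and then transfer back to $\anfield$ by elementarity.

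First set $Y := \st(X) \subseteq \Q^{|x|}_p$. By Fact~\ref{fact:sr}, $Y$ is $\anfield$-definable, say by a formula $\theta$. I first claim that $Y$ is compact. Since $X$ is $\Sa L$-definable and $X \subseteq V^{|x|} = \bigcup_{N \in \Z}(p^N \Z_p(\Sa L))^{|x|}$, saturation of $\Sa L$ yields a standard $N$ with $X \subseteq (p^N \Z_p(\Sa L))^{|x|}$, so $Y$ is bounded in $\Q^{|x|}_p$. Closedness of $Y$ is another saturation argument: any $y$ in the topological closure of $Y$ is the standard part of some element of $X$, since the partial type $\{\phi(x; b)\} \cup \{\|x - y\| \leq p^{-k} : k \in \N\}$ over $\{b,y\}$ is finitely satisfiable and $\Sa L$ is highly saturated.

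Fix $k \in \N$ and use compactness to choose a finite $p^{-k}$-net $z_1, \ldots, z_r \in Y$. Let $\chi_k(y)$ be the $\anfield$-formula
$$\forall x\bigl[\phi(x; y) \rightarrow \exists z\,(\theta(z) \wedge \|x - z\| \leq p^{-k})\bigr] \,\wedge\, \bigwedge_{i=1}^{r} \exists x\bigl[\phi(x; y) \wedge \|x - z_i\| \leq p^{-k}\bigr].$$
I verify $\Sa L \models \chi_k(b)$: every $x \in X = \phi(L^{|x|}; b)$ has $\st(x) \in Y \subseteq Y(\Sa L)$ with $\|x - \st(x)\|$ infinitesimal, giving the first conjunct; and each $z_i \in Y = \st(X)$ lifts to some $x_i \in X$ with $\|x_i - z_i\|$ infinitesimal, giving the remaining conjuncts. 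By $\anfield \prec \Sa L$ there is $a_k \in \Q^{|y|}_p$ with $\anfield \models \chi_k(a_k)$. The first conjunct forces $\phi(\Q^{|x|}_p; a_k)$ bounded, and combining the two conjuncts via the $p^{-k}$-net property and the ultrametric triangle inequality yields $d_H(\phi(\Q^{|x|}_p; a_k), Y) \leq p^{-k}$. Hence $\phi(\Q^{|x|}_p; a_k) \to Y$ in Hausdorff distance, exhibiting $Y$ as a Hausdorff limit of $\Cal X$.

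The main obstacle is in the setup rather than the transfer: one must extract compactness of $\st(X)$ from the hypothesis $X \subseteq V^{|x|}$ via saturation on the increasing family $(p^N \Z_p)^{|x|}$, and one must observe that Hausdorff approximation to the fixed compact set $Y$ is first-order expressible in $\anfield$ by combining a single pointwise containment clause with finitely many reverse clauses drawn from a net. Once these facts are recognised the rest is a clean first-order transfer through $b$.
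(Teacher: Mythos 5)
Your proof is correct: the saturation arguments for compactness of $\st(X)$, the verification that $b$ witnesses $\chi_k(y)$ in $\Sa L$, and the transfer to $\anfield$ by elementarity all go through, and the ultrametric inequality does close the gap between the two net clauses and the bound $d_H(\phi(\Q^{|x|}_p;a_k), \st(X)) \leq p^{-k}$. Your route is essentially the paper's --- pull a parameter back from $\Sa L$ through a first-order approximation condition built from a finite net --- but with one substantive difference: you invoke Fact~\ref{fact:sr} to obtain a formula $\theta$ defining $\st(X)$ and use it in the forward conjunct of $\chi_k$, whereas the paper's proof never uses definability of $\st(X)$. The paper instead takes a finite $A \subseteq \Q^m_p$ with $d_H(A,X) < t/2$ (hence $d_H(A,\st(X)) \leq t/2$) and transfers the condition $d_H(\phi(\cdot\,;y),A) < t/2$, which expresses both directions of the approximation relative to the finite set $A$ alone, then concludes by the triangle inequality for $d_H$. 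Your dependence on Fact~\ref{fact:sr} is avoidable inside your own argument: replace the first conjunct of $\chi_k$ by $\forall x\bigl[\phi(x;y) \rightarrow \bigvee_{i=1}^{r} \|x - z_i\| \leq p^{-k}\bigr]$, which $b$ still witnesses since each $x \in X$ is infinitesimally close to $\st(x)$, which in turn lies within $p^{-k}$ of some net point $z_i$. And it is worth avoiding: the paper explicitly frames Proposition~\ref{prop:other-direction} as showing that Theorem~\ref{thm:hausdorff-limit} is \emph{equivalent} to the definability of $\st(X)$ together with Proposition~\ref{prop:an-dim-eq}, with the stated hope of extracting a geometric proof of Fact~\ref{fact:sr} from a prospective geometric proof of Theorem~\ref{thm:hausdorff-limit}; a proof of the proposition that presupposes Fact~\ref{fact:sr} would make that derivation circular, so the net-only version is the one to keep.
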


\noindent
Let $|x| = m$ and $|y| = n $.
Given subsets $X,X'$ of $V^m$ and $t \in \R_{>0}$ we say that $d_H(X,X') < t$ if for every $a \in X$ there is $a \in X'$ such that $\| a - a' \| < t$ and vice versa (we do not define $d_H(X,X')$ in this case).

\begin{proof}
By saturation $\st(X)$ is compact so it suffices to show that for every $t \in \R_{>0}$ there is a bounded $Y \in \Cal X$ such that $d_H(\st(X),Y) \leq t$.
Fix $t \in \R_{>0}$.
As $X \subseteq V^m$ it is easy to see there is a finite $A \subseteq \Q^m_p$ such that $d_H(A,X) < t/2$, observe that $d_H(A,\st(X)) \leq t/2$.
As $\anfield$ is an elementary submodel of $\Sa K$ we obtain $a \in \Q^n_p$ such that $d_H(\phi(\Q^m_p;a),A) < t/2$.
Let $Y := \phi(\Q^m_p;a)$.
Note that $Y$ is bounded.
The triangle inequality for $d_H$ yields $d_H(\st(X),Y) \leq t$.
\end{proof}

\noindent
It should be possible to give a geometric proof of  Theorem~\ref{thm:hausdorff-limit} and thereby obtain a geometric proof of Fact~\ref{fact:sr}.
We are aware of two geometric proofs of the o-minimal analogue of Theorem~\ref{thm:hausdorff-limit}, Lion and Speissegger~\cite{MR2099073} and Kocel-Cynk, Pawlucki,  and Valette~\cite{MR3159091}.
The main tool of \cite{MR3159091} is the o-minimal Lipschitz cell decomposition, see~\cite{MR2546024}, and there is now a Lipschitz cell decomposition for $\anfield$-definable sets~\cite{MR2647135}.
We believe that there is a purely geometric proof of Shelah completeness for $\Q_p$ and $\anfield$ along these lines, but we have not seriously pursued this.
   
\section{A question}
\noindent
Fix an o-minimal expansion $\Sa R$ of $\rfield)$ such that the function $\R_{>0} \to \R_{>0}$ given by $t \mapsto t^r$ is only definable when $r \in \Q$, e.g.  $\R_{\mathrm{an}}$.
Fix $\lambda \in \R_{>0}$ and let $\lambda^\Z := \{ \lambda^m : m \in \Z\}$.
Following \cite{vdd-Powers2} Miller and Speissegger show that $\slam$ is tame~\cite[Section 8.6]{Miller-tame}.
It follows by \cite[Theorem 4.1.2, Corollary 4.1.7]{Tychon-thesis} and \cite[Corollary 2.6]{CS-I} that $\slam$ is $\nip$.
There is a canonical notion of dimension $d$ for $\slam$-definable sets which agrees with naive, topological, and Assouad dimension.
\newline

\noindent
Let $\slam \prec \Sa N$, $V$ be the convex hull of $\R$ in $H$, $\st$ be the standard part map $V^m \to \R^m$, and $X \subseteq N^m$ be $\Sa N$-definable.
We believe $\slam$ is Shelah complete but we have not carefully checked this.
Assuming that this is true, it is possible to give a geometric proof that $d(\st(V^m \cap X)) \leq d(X)$.
There should be a $\nip$-theoretic proof.
More specifically there should be a combinatorical invariant $\mathbf{I}_{\Sa M} Y$ of a definable set $Y$ in an $\nip$ structure $\Sa M$ which satisfies at least the following:
\begin{enumerate}
\item $\mathbf{I}_{\slam}$ agrees with $d$,
\item $\mathbf{I}_{\Sh M} Y = \mathbf{I}_{\Sa M} Y$ (this should be immediate from the definition and Fact~\ref{fact:shelah-cs}),
\item $\mathbf{I}_{\Sa M} Y \cup Y' = \max\{ \mathbf{I}_{\Sa M} Y, \mathbf{I}_{\Sa M} Y'\}$
\item $\mathbf{I}_{\Sa M} Y \times Y' = \mathbf{I}_{\Sa M} Y + \mathbf{I}_{\Sa M} Y'$,
\item $\mathbf{I}_{\Sa M} f(Y) \leq \mathbf{I}_{\Sa M} Y$ for any $\Sa M$-definable function $f : Y \to M^n$.
\end{enumerate}
If $\mathbf{I}$ satisfies $(1) - (5)$ and $\slam$ is Shelah complete then we have
$$d(\st(V^m \cap X)) = \mathbf{I}_{\slam} \st(V^m \cap X) \leq \mathbf{I}_{\Sh N} X = \mathbf{I}_{\Sa N} X = d(X).$$
Dp-rank does not satisfy $(1)$.
Tychonievich~\cite{Tychon-thesis} has shown that every countable $\slam$-definable set is internal to $\lambda^\Z$ and the induced structure on $\lambda^\Z$ is interdefinable with $(\lambda^\Z,\times,<)$.
So the dp-rank agrees with the canonical Presburger dimension on countable $\slam$-definable sets.
Furthermore the dp-rank of any uncountable $\slam$-definable set is $\aleph_0 - 1$.
So in this setting dp-rank is not very useful.
\newline

\noindent
Suppose $Z \subseteq \R^m$ is $\slam$-definable.
If $d(Z) = 0$ then $Z$ is internal to $\lambda^\Z$ and if $d(Z) > 0$ then there is a definable surjection $Z \to \R$.
As $(\Z,+,<)$ does not interpret an infinite field (see \cite{big-nip}) we have $d(Z) > 0$ if and only if the induced structure on $Z$ does not interpret an infinite field.
We should have $d(Z) = 0$ if and only if $Z$ is ``modular".
So $\mathbf{I}_{\Sa M} X$ should be the ``non-modular dimension" of $X$.
\newline

\noindent
We give two other examples of $\nip$ structures to which this should apply.
The first example is $(\Sa S, \{ \lambda, \lambda^\lambda,\lambda^{\lambda^\lambda},\ldots \})$ where $\lambda > 1$ and $\Sa S$ is an o-minimal expansion of $\rfield)$ such that every $\Sa S$-definable function $\R \to \R$ is eventually bounded above by some compositional iterate of the exponential (all known o-minimal expansions of $\rfield)$ satisfy this condition).
Miller and Tyne~\cite{Miller-iteration} show that this structure is tame, in particular naive dimension is well behaved.
The induced structure on $D := \{\lambda, \lambda^\lambda,\lambda^{\lambda^\lambda},\ldots \}$ should be interdefinable with $(D,<)$ and any zero-dimensional definable set should be internal to $D$.
So $\mathbf{I}_{(\Sa S,D)}$ should agree with naive dimension.
Second, let $\Log$ be the Iwasawa logarithm $\Q^\times_p \to \Q_p$.
Mariaule~\cite{Ma-adic} shows that $(\Q_p,\Log)$ is $\nip$.
We have $\Log(a) = 0$ if and only if $a = b p^k$ for some $k$ and root of unity $b \in \Q_p$.
It follows that $p^\Z$ is $(\Q_p,\Log)$-definable.
It is also shown in \cite{Ma-adic} that the induced structure on $p^\Z$ is interdefinable with $(p^\Z,\times,\vartriangleleft)$ where $p^k \vartriangleleft p^l$ if and only if $k < l$.
It should follow from \cite{Ma-adic} that naive dimension is well behaved in $(\Q_p,\Log)$ and a zero-dimensional definable set should be internal to $p^\Z$.
So we expect $\mathbf{I}_{(\Q_p,\Log)}$ to coincide with naive dimension.
\newline

\noindent
The vague question here is: What is the right combinatorial definition of the canonical dimension in structures such as $(\Sa R,\lambda^\Z)$, $(\Sa S, \{\lambda, \lambda^\lambda,\lambda^{\lambda^\lambda},\ldots \})$, or $(\Q_p,\Log)$?

\bibliographystyle{abbrv}
\bibliography{NIP}
\end{document}